\title{Families with no perfect matchings}
\author{Mihir Singhal\thanks{Massachusetts Institute of Technology, Cambridge, MA 02139. Email: \href{mailto:mihirs@mit.edu}{\nolinkurl{mihirs@mit.edu}}.}}
\date{} 
\begin{document}

\maketitle

\begin{abstract}
We consider families of $k$-subsets of $\{1, \dots, n\}$, where $n$ is a multiple of $k$, which have no perfect matching. An equivalent condition for a family $\mcF$ to have no perfect matching is for there to be a \textit{blocking set}, which is a set of $b$ elements of $\{1, \dots, n\}$ that cannot be covered by $b$ disjoint sets in $\mcF$. We are specifically interested in the largest possible size of a family $\mcF$ with no perfect matching and no blocking set of size less than $b$. Frankl resolved the case of families with no singleton blocking set (in other words, the $b=2$ case) for sufficiently large $n$ and conjectured an optimal construction for general $b$. Though Frankl's construction fails to be optimal for $k = 2, 3$, we show that the construction is optimal whenever $k \ge 100$ and $n$ is sufficiently large. 
\end{abstract}

\section{Introduction}
Let $n = ks$ where $k, s \ge 2$. Consider a family $\mcF$ of $k$-element subsets of $[n] = \{1, \dots, n\}$. A \textit{matching} in $\mcF$ is a collection of sets in $\mcF$ which are all pairwise disjoint. A \textit{perfect matching} is one which covers all elements of $[n]$ (equivalently, a matching of $k$-sets is a perfect matching if it has size $s$). We are interested in families $\mcF$ with no perfect matching. Specifically, we explore the question about how large such a family can be. This question is related to the Erd\H{o}s matching conjecture \cite{erdos}, which regards the largest possible size of a family with no matching of size $r$, where $n$ is not necessarily equal to $rk$.

Kleitman \cite{kleitman} showed that largest possible size of $\mcF$ given that it has no perfect matching is $\binom{n-1}{k}$, which is achieved by any family $\mcF$ which consists of all $k$-sets that do not contain a single vertex. Frankl \cite{frankl-shifting} later showed that these are the only families which achieve this maximum. Such families can be considered \textit{trivial} in the sense that there is a vertex in none of the sets of $\mcF$, which immediately implies that $\mcF$ does not have a perfect matching. Frankl considered the question of finding the largest possible size of a nontrivial $\mcF$ which has no perfect matching, and proved the following:

\begin{thm}[\cite{frankl-main}] \label{thm:frankl}
Suppose $s$ is sufficiently large (relative to $k$). Then, if $\mcF$ is a nontrivial family of $k$-subsets of $[n]$, then the following family of $k$-sets attains the maximum possible size of $\mcF$:
\[\mc E(k, n) = \big\{\{1\} \cup [n-k+2, n]\big\} \cup \big\{S: S \cap [n-k+2, n] \neq \emptyset,\: 1 \notin S,\: 2 \in S\big\} \cup \big\{T: T \subset [3, n]\big\}.\]
\end{thm}

The family $\mc E(k, n)$ has no matching for the simple reason that there are no two disjoint sets in $\mc E(k, n)$ which cover both $1$ and $2$ (in particular, there is also no single set which covers both). This is an example of the notion of a \textit{blocking set}. 

In general, a \textit{blocking set} of $\mcF$ is defined to be any set $B \subset [n]$ of size at most $s$ which cannot be covered by a matching of size $|B|$. Note that a blocking set also cannot be covered by a larger matching since one can delete sets from any such matching which do not intersect $B$. (One can also define a blocking set to be one that is not coverable by any matching, regardless of its size; we will later see in \cref{lem:coveringmatching} that it does not matter for our purposes which definition we choose.) It is clear that if $\mcF$ has a blocking set, then it cannot have a perfect matching; a small blocking set might also be considered to be a ``trivial reason" that $\mcF$ does not have a matching. Indeed, as shown in \cref{thm:frankl}, the smallest nontrivial $\mcF$ with no perfect matching has a blocking set of size $2$. It is thus natural to ask what the largest possible size of $\mcF$ is if it has no small blocking set. Indeed, Frankl posed the following question:
\begin{question}[\cite{frankl-main}]
Suppose $\mcF$ has no perfect matching and no blocking set of size less than $b$. What is the largest possible size of $\mcF$?
\end{question}
The case $b=1$ is equivalent to having no restriction on blocking sets of $\mcF$, so Kleitman's bound of $\binom{n-1}{k}$ is the maximum. The $b=2$ case restricts to nontrivial $\mcF$, which is just \cref{thm:frankl} (for large enough $n$). In fact, Frankl described a construction of a family for general $b$ which he conjectured to be optimal. The construction is as follows.
\begin{example}[\cite{frankl-main}] \label{ex:opt}
Suppose $n \ge bk + b$. Let $E_1, \dots, E_{b-1}$ be pairwise disjoint $(k-1)$-sets, none of which intersect $[b]$. Define the families $\mc{E}_i$, for $1 \le i \le b$, as follows:
\[\mc{E}_i = \big\{\{i\} \cup E_i\big\} \cup \big\{\{i\} \cup S: S \subset [b+1,n],\: |S| = k-1,\: S \cap (E_1 \cup \dots \cup E_{i-1}) \neq \emptyset\big\}.\]
(If $i = b$, then ignore the $\{\{i\} \cup E_i\}$.) Then we define the family
\[\mc E(k, n, b) = \big\{T: T \subseteq [b+1, n],\: |T| = k\big\} \cup \left(\bigcup_{i=1}^b \mc{E}_i \right).\]
If there is any matching in $\mc E(k, n, b)$ covering $[b]$, then by induction on $i$ it will need to contain the set $\{i\} \cup E_i$ for each $i$. But there is no such set for $i=b$, a contradiction, so there can be no matching covering $[b]$. Therefore $[b]$ is a blocking set of $\mc E(k, n, b)$, and it is also easy to check that there is no smaller blocking set. It is also simple to check that when $k > 2$, this family $\mc E(k, n, b)$ is maximal in the sense that if any sets are added to it, then there will be a perfect matching.
\end{example}

When $b=2$, the set $\mc E(k, n, 2)$ is the same (up to permutation of $[n]$) as $\mc E(k, n)$. Indeed, Frankl conjectured that the above construction is optimal when $n$ is large:
\begin{conj}[\cite{frankl-main}] \label{conj:main}
Suppose that $n = ks$ and $k, s \ge 2$ and $b$ is a positive integer. There exists $s_0(k, b)$ such that whenever $s \ge s_0(k, b)$, the following holds: If $\mc F$ is a family of $k$-subsets of $[n]$ with no perfect matching and no blocking set of size less than $b$, then
\[|\mcF| \le |\mc E(k, n, b)|.\]
\end{conj}

It turns out that \cref{conj:main} actually fails when $b > 2$ and $k$ is 2 or 3. When $k=2$, $\mcF$ is just an ordinary graph, and it turns out that one can just add edges to $\mc E(2, n, b)$. In particular, noting that $E_i$ are all singleton sets, we can add all the edges between each $i \in [b]$ and each $E_j$ (only some of these edges exist in $\mc E(2, n, b)$). Then, elements of $[b]$ are only adjacent to elements of a $(b-1)$-element set, so there is still no matching covering $[b]$.

When $k=3$, the situation is a little more subtle, but there is still a construction that has slightly more sets than $\mc E(3, n, b)$.

\begin{example} \label{ex:k3}
Suppose $k=3$ and $n \ge bk+b$. Then define the family of $k$-sets
\[\mc E'(3, n, b) = \big\{S: |S \cap [b]| = 1,\: |S \cap [b+1,2b-1]| \ge 1\big\} \cup \big\{T: T \subset [b+1, n] \big\}.\]
The set $[b]$ is a blocking set of $\mc E'(3, n, b)$ because any matching which covers $[b]$ would need to contain one set which covers each element of $[b]$, and each such set would need to contain at least one element of $[b+1, 2b-1]$. Thus these $b$ sets cannot be pairwise disjoint, so no such matching can exist. It is also easy to check that there is no smaller blocking set. Then, the size of $\mc E'(3, n, b)$ exceeds that of $\mc E(3, n, b)$ by $(b^3-7b+6)/6$, which is positive whenever $b>2$. We defer the calculation to verify this to \cref{sec:sizeE}.
\end{example}

Our main theorem is that \cref{conj:main} holds whenever $k \ge 100$, and also when $k \ge \max \{b+1, 6\}$. Specifically, there exists a function $s_0(k, b)$ such that the following theorem holds. 

\begin{thm} \label{thm:main}
Let $k, b$ be positive integers such that either $k \ge 100$ or $k \ge \max\{b+1, 6\}$. Also let $n = ks$, where $s \ge s_0(k, b)$. Suppose $\mcF$ is a family of subsets of $[n]$ of size $k$. Then, if $\mcF$ has no perfect matching and no blocking set of size less than $b$, then
\[|\mcF| \le |\mc E(k, n, b)|.\]
\end{thm}

This essentially resolves \cref{conj:main}, except in the case where $k$ is small. Note that \cref{thm:frankl} for $k \ge 6$ can be deduced as a special case of \cref{thm:main}. The remainder of this paper will be dedicated to proving \cref{thm:main}. We first briefly describe our method of proof. Central to our method is the technique of \textit{shifting}, first introduced by Erd\H{o}s, Ko, and Rado \cite{erdos-ko-rado}.
We will define shifting in \cref{sec:shifting}.
Essentially, shifting is an operation that can be performed on a family of sets and introduces structure into the family while preserving the absence of a perfect matching. We first show in \cref{sec:shiftF}, using a method similar to that used by Frankl in proving \cref{thm:frankl}, that the family $\mcF$ can be shifted on all but a constant number of elements of $[n]$ while maintaining the property that it has no small blocking set. Then, in \cref{sec:sizeb}, we exploit the shifted structure of $\mcF$ to show that in order for it to be big enough, it must have a blocking set of size exactly $b$. In \cref{sec:redgraph}, we use the blocking set of size $b$ in addition to the mostly shifted structure of $\mcF$ to reduce proving the theorem to proving \cref{prop:graph}, a much simpler statement which is about ordinary graphs. Finally, we prove \cref{prop:graph} in \cref{sec:pfgraph}.

\section{Definitions and preliminary observations}
We start with some definitions. We will refer to the elements of $[n]$ as \textit{vertices}. As mentioned in the introduction, a \textit{matching} is any collection of disjoint sets, and is said to \textit{cover} a set if all elements of the set are contained in one of the elements in the matching. We will frequently abuse notation by saying that a collection of sets contains a vertex $x$ to mean that $x$ belongs to one of the sets of the collection. Also, if $\mcF$ is a family of sets, we define $\mcF(x)$ to be the family of all sets $F$ such that $x \notin F$ and $F \cup \{x\} \in \mcF$.

We use standard asymptotic notation throughout, including big $O$ and little $o$, big $\Omega$ and little $\omega$, and the relation $\sim$ (we say that $f \sim g$ if $f = (1+o(1))g$). All such asymptotic notation will be with respect to $n$ (or equivalently $s$); in other words, $k$ and $b$ are considered to be constant for all asymptotic notation. We will also assume throughout that $n$ is at least a sufficiently large function of $k$ and $b$, using this assumption implicitly on many occasions.

\subsection{Size of \texorpdfstring{$\mc E(k, n, b)$}{E(k, n, b)}} \label{sec:sizeE}

We now approximate the size of $\mc E(k, n, b)$, which will be useful for later comparisons. First, we show an approximation for the binomial coefficient $\binom{n-t}{k-1}$. We have, for any constant $t$, that
\[\binom{n-t}{k-1} = \binom{n}{k-1} - \sum_{i=1}^t \binom{n-i}{k-2} = \binom{n}{k-1} - t\binom{n}{k-2} + O(n^{k-3}).\]
Now, note that the size of each $\mc{E}_i$ (defined in \cref{ex:opt}) is
\[|\mc{E}_i| = \binom{n-b}{k-1} - \binom{n-b-(k-1)(i-1)}{k-1} + 1.\]
(In the case that $i=b$, the final $1$ term should be omitted.) Then we have
\begin{align}
|\mc E(k, n, b)| &= \binom{n-b}{k} + b \binom{n-b}{k-1} - \sum_{i=1}^{b} \binom{n-b-(k-1)(i-1)}{k-1} + b-1 \nonumber \\
&= \binom{n}{k} - \sum_{i=1}^b \binom{n-i}{k-1} + b \binom{n-b}{k-1} - \sum_{i=1}^{b} \binom{n-b-(k-1)(i-1)}{k-1} + b-1 \nonumber \\
&= \binom{n}{k} - b\binom{n}{k-1} + \sum_{i=1}^b i\binom{n}{k-2} - b^2\binom{n}{k-2} + \sum_{i=1}^b (b+(k-1)(i-1))\binom{n}{k-2} + O(n^{k-3}) \nonumber \\
&= \binom{n}{k} - b\binom{n}{k-1} + \left(\frac{b(b+1)}{2} - b^2 + b^2 + \frac{b(b-1)}{2}(k-1)\right)\binom{n}{k-2} + O(n^{k-3}) \nonumber \\
&= \binom{n}{k} - b\binom{n}{k-1} + \left(b + \frac{b(b-1)k}{2}\right)\binom{n}{k-2} + O(n^{k-3}). \label{eq:sizeE}
\end{align}
Thus, $\mc E(k, n, b)$ is missing
\begin{equation}
b\binom{n}{k-1} - \left(b + \frac{b(b-1)k}{2}\right)\binom{n}{k-2} + O(n^{k-3}) \label{eq:missingE}
\end{equation}
sets.

We are now able to check the claim in \cref{ex:k3} that when $k=3$, then $\mc E'(3, n, b)$ is actually larger than $\mc E(3, n, b)$. When $k=3$ we have
\begin{align*}
|\mc E(3, n, b)| &= \binom{n-b}{3} + b \binom{n-b}{2} - \sum_{i=1}^{b} \binom{n-b-2(i-1)}{2} + b-1 \\
&= \binom{n-b}{3} + b \binom{n-b}{2} - \sum_{i=1}^{b} \frac{(n-b)(n-b-1) - 2(2n-2b-1)(i-1) + 4(i-1)^2}{2} + b-1 \\
&= \binom{n-b}{3} + b \binom{n-b}{2} - \frac{b(n-b)(n-b-1)}{2} + \frac{(2n-2b-1)(b-1)b}{2} - \frac{(b-1)b(2b-1)}{3} + b-1 \\
&= \binom{n-b}{3} + b \binom{n-b}{2} - \frac{1}{2}bn^2 + \frac{1}{2}(4b^2 - b)n + \frac{1}{6}(-13b^3 + 6b^2 + 7b - 6).
\end{align*}
We can also compute
\begin{align*}
|\mc E'(3, n, b)| &= \binom{n-b}{3} + b \binom{n-b}{2} - b\binom{n-2b+1}{2} \\
&= \binom{n-b}{3} + b \binom{n-b}{2} - \frac{1}{2}bn^2 + \frac{1}{2}(4b^2 - b)n + (-2b^3 + b^2).
\end{align*}
Thus, the difference in sizes is 
\[|\mc E'(3, n, b)| - |\mc E(3, n, b)| = \frac{1}{6}(b^3-7b+6),\]
which is strictly positive whenever $b > 2$.

\subsection{Shifting} \label{sec:shifting}
The operation of shifting was originally defined by Erd\H{o}s, Ko, and Rado \cite{erdos-ko-rado},
and has been useful for many different problems related to the intersections of families of sets. We refer the reader to Frankl and Tokushige's survey \cite{frankl-tokushige}, as well as Frankl's older survey \cite{frankl-shifting}, for an exposition of the use of shifting on various problems.

For a family $\mcF$ and vertices $x, y$ such that $|\mcF(x)| \ge |\mcF(y)|$, we will define the \textit{shift operator} $S_{x, y}$ on $\mcF$. For each $F \in \mcF$ define the shift of $F$ as follows:
\[
S_{x, y}(F) = 
\begin{cases}
(F \setminus \{y\}) \cup \{x\}, & \text{if $y \in F$, $x \notin F$, and $(F \setminus \{y\}) \cup \{x\} \notin \mcF$,} \\
F, &\text{else}
\end{cases}
\]
Then, the shift of $\mcF$ is defined as
\[S_{x, y}(\mcF) = \{S_{x, y}F: F \in \mcF\}.\]
Intuitively, the shift can be thought of as changing $y$ to $x$ in all sets of $\mcF$ such that doing so would not create a collision with another set already in $\mcF$. We say that the shift is \textit{meaningful} if $S_{x, y}(\mcF) \neq \mcF$.

Shifting is a useful operation primarily due to two properties: that it never creates a perfect matching and that it can only be performed finitely many times. Specifically, we have the following two facts, whose proofs can both be found in \cite{frankl-shifting}.
\begin{fact}
If $\mcF$ has no matching of size $r$, then $S_{x, y}(\mcF)$ also has no matching of size $r$. In particular, letting $r=s$, shifting preserves the property of having no perfect matching.
\end{fact}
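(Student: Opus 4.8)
The plan is a proof by contradiction. Write $\mc G := S_{x,y}(\mcF)$ and suppose $\mc G$ contains a matching $M$ with $|M| = r$; the goal is to produce a matching of size $r$ inside $\mcF$. The starting point is to classify the members of $\mc G$: a set $G \in \mc G$ is either \emph{old}, meaning $G \in \mcF$ and $S_{x,y}$ fixed $G$, or \emph{new}, meaning $G = (F \setminus \{y\}) \cup \{x\}$ for some $F \in \mcF$ with $y \in F$, $x \notin F$, and $(F\setminus\{y\})\cup\{x\} \notin \mcF$ (so in particular $G \notin \mcF$). Two features of new sets will be used repeatedly: a new set contains $x$ and does not contain $y$, and its ``preimage'' $(G \setminus \{x\}) \cup \{y\}$ always lies in $\mcF$. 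Since $M$ is a matching, at most one of its members contains $x$, so at most one set of $M$ is new.

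If $M$ has no new set, then $M \subseteq \mcF$ already and we are done. Otherwise, let $G \in M$ be the unique new set and put $F := (G \setminus \{x\}) \cup \{y\} \in \mcF$; we split according to whether $y$ is covered by $M \setminus \{G\}$. If no set of $M \setminus \{G\}$ contains $y$, then $F$ is disjoint from every set of $M \setminus \{G\}$, so $(M \setminus \{G\}) \cup \{F\}$ is a matching of size $r$ contained in $\mcF$. If instead some $H \in M \setminus \{G\}$ contains $y$, then $x \notin H$ (as $x \in G$ and $M$ is a matching), so $H$ is a set of $\mc G$ containing $y$ but not $x$; this forces $H$ to be old, hence $H \in \mcF$, and --- since the shift left $H$ unchanged even though $y \in H$ and $x \notin H$ --- the definition of $S_{x,y}$ forces $(H \setminus \{y\}) \cup \{x\} \in \mcF$ as well. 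Writing $H'' := (H \setminus \{y\}) \cup \{x\}$, one checks that $F$ and $H''$ both lie in $\mcF$, are disjoint from each other, and are disjoint from every set of $M \setminus \{G, H\}$ --- using that, among the members of $M$, only $G$ contains $x$ and only $H$ contains $y$ --- and that $F$ and $H''$ are distinct from each other and from the sets they are not replacing. Hence $(M \setminus \{G, H\}) \cup \{F, H''\}$ is a matching of size $r$ inside $\mcF$, contradicting the hypothesis. The ``in particular'' clause is then immediate on taking $r = s$, since a matching of $k$-subsets of $[ks]$ is perfect precisely when it has size $s$.

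I do not anticipate a genuine obstacle, since this is the classical Erd\H{o}s--Ko--Rado-style shifting argument; the step that needs the most care is the case where some other matching set $H$ contains $y$, where one must extract from the single fact ``$S_{x,y}$ fixed $H$'' the two consequences $H \in \mcF$ and $(H\setminus\{y\})\cup\{x\} \in \mcF$, and then verify that performing the two swaps $G \mapsto F$ and $H \mapsto H''$ simultaneously yields a genuinely valid matching of the same size. The clause ``$(F\setminus\{y\})\cup\{x\} \notin \mcF$'' in the definition of the shift operator is exactly what makes this bookkeeping go through. Everything else follows directly from the definitions, and the direction hypothesis $|\mcF(x)| \ge |\mcF(y)|$ plays no role here (it is needed only for the companion finiteness fact).
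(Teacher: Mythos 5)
Your proof is correct and is essentially the classical shifting argument: the paper does not prove this fact itself but cites Frankl's survey \cite{frankl-shifting}, and the standard proof there proceeds exactly as yours does --- replace the unique new (shifted) set of the matching by its preimage, and when some other matching set contains $y$ but not $x$, use that its $(x,y)$-swap must already lie in $\mcF$ (else that set would have been shifted) to perform both exchanges simultaneously. No gaps.
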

\begin{fact}
If $\mcF' = S_{x,y}(\mcF)$ is a meaningful shift of $\mcF$, then
\[\sum_{i=1}^n |\mcF'(i)|^2 > \sum_{i=1}^n |\mcF(i)|^2.\]
\end{fact}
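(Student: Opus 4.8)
The plan is to track how $\sum_{i=1}^n |\mcF(i)|^2$ changes under the single shift $S_{x,y}$, using the elementary observation that $|\mcF(i)|$ equals the number of members of $\mcF$ that contain the vertex $i$ (the correspondence being $F\mapsto F\setminus\{i\}$ between members of $\mcF$ through $i$ and members of $\mcF(i)$). Write $d_i=|\mcF(i)|$ and $d_i'=|\mcF'(i)|$, so the goal is $\sum_i (d_i')^2 > \sum_i d_i^2$. First I would record that $S_{x,y}$ is injective on $\mcF$: if two distinct members of $\mcF$ had the same image, then checking the three cases (neither shifted, exactly one shifted, both shifted) and invoking the defining requirement $(F\setminus\{y\})\cup\{x\}\notin\mcF$ for a set to be shifted produces a contradiction; hence $|\mcF'|=|\mcF|$.

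Next I would show $d_i'=d_i$ for every vertex $i\notin\{x,y\}$. The key point is that shifting a set never changes whether it contains such an $i$: replacing $F$ by $(F\setminus\{y\})\cup\{x\}$ leaves membership of $i$ unchanged because $i\ne x,y$. Together with injectivity, $S_{x,y}$ restricts to a bijection from $\{F\in\mcF: i\in F\}$ onto $\{G\in\mcF': i\in G\}$ (surjectivity holds because the unique $\mcF$-preimage of a $G\in\mcF'$ with $i\in G$ again contains $i$). Consequently every term with $i\notin\{x,y\}$ is the same on both sides, and the problem reduces to comparing $d_x^2+d_y^2$ with $(d_x')^2+(d_y')^2$.

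Finally I would analyze the two distinguished vertices. Any member of $\mcF$ containing both $x$ and $y$, or neither, is fixed by the shift; the only sets that move are those containing $y$ but not $x$, and exactly $A$ of them move, where $A\ge 1$ precisely because the shift is meaningful. Each move deletes one set through $y$ and creates one set through $x$, and (again by membership-preservation and injectivity) creates and destroys nothing else, so $d_y'=d_y-A$ and $d_x'=d_x+A$. Using the hypothesis $d_x=|\mcF(x)|\ge|\mcF(y)|=d_y$, a direct computation gives
\[(d_x')^2+(d_y')^2-d_x^2-d_y^2 = (d_x+A)^2+(d_y-A)^2-d_x^2-d_y^2 = 2A\big((d_x-d_y)+A\big)\ge 2A^2 > 0,\]
which is the desired strict inequality. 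I expect the only genuine subtlety to be the bookkeeping behind $d_x'=d_x+A$ and $d_y'=d_y-A$ — i.e., confirming that the sets produced by shifting are pairwise distinct, are genuinely new to $\mcF$, and are not inadvertently removed — but all of this reduces to injectivity of $S_{x,y}$ and the fact that a shifted set gains $x$ and loses $y$, after which the estimate above is immediate.
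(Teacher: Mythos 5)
Your proof is correct. The paper itself does not prove this fact but defers to the cited survey of Frankl, and your argument is exactly the standard one found there: the shift is injective on $\mcF$, degrees at vertices other than $x,y$ are unchanged, and with $A\ge 1$ sets moved one gets $|\mcF'(x)|=|\mcF(x)|+A$, $|\mcF'(y)|=|\mcF(y)|-A$, whence the sum of squares increases by $2A\bigl((|\mcF(x)|-|\mcF(y)|)+A\bigr)\ge 2A^2>0$ using the hypothesis $|\mcF(x)|\ge|\mcF(y)|$ built into the definition of $S_{x,y}$.
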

From the above monovariant, we deduce that only a finite number of meaningful shifts can be performed on any family (even if the vertices of $[n]$ are permuted in between shifts). 

Now, we say a family $\mcF$ is \textit{shifted} on a set of vertices $Y$ if $|\mcF(x)|$ is decreasing in $x$ for all $x \in Y$, and there are no meaningful shifts that can be performed on $\mcF$ which use two vertices in $Y$. Shifted families are useful because they have a lot of structure, due to the following fact which follows directly from the definition of a shifted family.

\begin{fact} \label{fact:shifted}
Suppose $\mcF$ is shifted on $Y$. Then, if $F \in \mcF$ contains $y$ but not $x$, where $x < y$ and $x, y \in Y$, then $(F \setminus \{y\}) \cup \{x\} \in \mcF$. 
\end{fact}

Eventually, we will be able to show that any large enough $\mcF$ satisfying the conditions of \cref{thm:main} can be made to be shifted on a large number of vertices, and we will then be able to use this structure to deduce a lot about $\mcF$.

\subsection{Blocking sets have no covering matching}
Before beginning the proof of \cref{thm:main}, we prove one last lemma. The following lemma states that if $\mc F$ is large enough, then a small blocking set of $\mcF$ cannot be covered by any matching (even a matching with size smaller than the size of the blocking set). This fact that will be useful several times.

\begin{lemma} \label{lem:coveringmatching}
Suppose that $|\mc F| > |\mc E(k, n, b)|$ and let $B$ be a blocking set of $\mcF$, where $|B| \le s/2$. Then there is no matching that covers $B$.
\end{lemma}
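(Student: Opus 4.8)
The plan is to argue by contradiction: assuming that some matching of $\mcF$ covers $B$, I would use the size hypothesis $|\mcF| > |\mc E(k,n,b)|$ to enlarge it into a matching of size \emph{exactly} $|B|$ that still covers $B$, which is impossible by the very definition of a blocking set.

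First I would normalize the hypothetical matching. Suppose $M$ is a matching in $\mcF$ covering $B$. Discarding from $M$ every set disjoint from $B$ leaves a matching still covering $B$, so we may assume that each set of $M$ meets $B$; since these sets are pairwise disjoint, this forces $m := |M| \le |B| =: \beta$. If $m = \beta$, then $M$ is already a matching of size $|B|$ covering $B$, contradicting that $B$ is a blocking set; hence we may assume $m < \beta \le s/2$.

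The main step is a greedy extension of $M$. Suppose we have built a matching $M' \subseteq \mcF$ with $M \subseteq M'$, $|M'| = t$, and $m \le t < \beta$, and put $W = \bigcup_{F \in M'} F$, so that $|W| = kt \le k(\beta - 1) < ks/2 = n/2$; this inequality is the sole point at which the hypothesis $|B| \le s/2$ is used. The number of $k$-subsets of $[n]$ disjoint from $W$ is $\binom{n-|W|}{k}$, which is $\Omega(n^k)$ since $|W| < n/2$, whereas the number of $k$-sets \emph{not} belonging to $\mcF$ is less than $\binom{n}{k} - |\mc E(k,n,b)|$, which is $O(n^{k-1})$ by \eqref{eq:missingE}. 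Hence, for $n$ large enough (as assumed throughout), there is some $F \in \mcF$ with $F \cap W = \emptyset$, and $M' \cup \{F\}$ is a matching in $\mcF$ of size $t+1$ that still covers $B$. Iterating from $t = m$ up to $t = \beta$ produces a matching in $\mcF$ of size $\beta$ covering $B$, contradicting that $B$ is a blocking set. Therefore no matching covers $B$.

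The one place requiring care is the bookkeeping in the greedy step: one must ensure the growing vertex set $W$ never reaches a constant fraction of $n$, so that the $\Omega(n^k)$ available disjoint $k$-sets always dominate the $O(n^{k-1})$ sets missing from $\mcF$. The bound $|W| < n/2$ — i.e.\ precisely the assumption $|B| \le s/2$ — is what guarantees this, and every other step is immediate from the definitions together with the size estimate for $\mc E(k,n,b)$ from \cref{sec:sizeE}.
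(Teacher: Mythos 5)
Your proof is correct and is essentially the paper's argument: both hinge on the same counting that a matching covering $B$ occupies at most $n/2$ vertices (using $|B| \le s/2$), so the $\Omega(n^k)$ $k$-sets disjoint from it cannot all be missing from $\mcF$, since by \eqref{eq:missingE} only $O(n^{k-1})$ sets are missing. The paper packages this as a contradiction with a maximal matching covering $B$ (no disjoint set exists, so too many sets are missing), while you iterate the same extension step until reaching size $|B|$ and contradict the blocking-set definition directly --- a cosmetic difference only.
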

\begin{proof}
Suppose otherwise, and let the matching $M$ cover $B$. Furthermore let $M$ be of maximal size; note that its size must be less than $|B|$ since $B$ is a blocking set. Then $\mcF$ cannot contain a set disjoint from $M$, since otherwise we could append that set to $M$ to get a larger matching covering $B$. $M$ contains at most $k|B| \le n/2$ vertices, so the number of sets missing from $\mcF$ is at least
\[\binom{n/2}{k} = \Omega(n^k),\]
so $\mcF$ is missing more sets than $\mc E(k, n, b)$ (since the number of missing sets of $\mc E(k, n, b)$ is given by \eqref{eq:missingE}), a contradiction.
\end{proof}

\section{Making \texorpdfstring{$\mcF$}{F} shifted on most of \texorpdfstring{$[n]$}{[n]}} \label{sec:shiftF} 
We will now begin with the proof of \cref{thm:main}. We proceed by induction on $b$. The base case $b=1$ is Kleitman's result in \cite{kleitman}.
Assume true for all smaller values of $b$, and also suppose for the sake of contradiction that there exists a family $\mcF$ of $k$-subsets of $[n]$ with no perfect matching and no blocking set of size less than $b$ such that $|\mcF| > |\mc E(k, n, b)|$. Further assume that $\mcF$ is of maximum possible size.

In this section we will prove the following proposition, which implies that we can assume that $\mcF$ is shifted on all but a constant number of vertices. Our proof of this proposition is based on the proof of Proposition 4.4 in \cite{frankl-main}.

\begin{prop} \label{prop:shift}
There exists a constant $c = c(k, b)$ such that it is possible to apply shifts to $\mcF$ and permute the vertices in $[n]$ so that $\mcF$ has no blocking set of size less than $b$ and is shifted on $[n-c]$.
\end{prop}
\begin{proof}
Suppose that $\mcF$ has been shifted as much as possible while maintaining the property that $\mcF$ has no blocking set of size less than $b$. Thus, every meaningful shift of $\mcF$ creates a blocking set of size less than $b$. 

Permute the vertices, sorting by the size of $\mcF(i)$, so that $\mcF(1)$ has the largest size. Then, let $A = [n-2b, n]$ be the last $2b+1$ vertices in $[n]$. Note that no $b'$-subset of $A$, for $b' < b$, can form a blocking set, so there must exist matchings of $b'$ sets that cover each such subset. Let $T$ consist of all the vertices in $[n]$ which are covered by any such matching. We then have $|T| \le 2^{2b+1}(b-1)k$. 

We now claim that $\mcF$ is shifted on $[n] \setminus T$. Suppose otherwise, so that there is a meaningful shift $S_{x, y}$, where $x < y$ and $x, y \notin T$. By assumption, this means that the shifted family $\mcF' = S_{x, y}(\mcF)$ must contain a blocking set $B$ of size $b'$, for some $b' < b$. We may furthermore pick $B$ so that $b'$ is minimal. Note that the matching which covers any $b'$-subset of $A$ is contained entirely in $T$, and thus does not contain $y$, and is therefore not affected by the shift $S_{x, y}$. Thus, every $b'$-subset of $A$ is still not a blocking set of $\mc F'$, so $B$ must contain an vertex not in $A$. We show that every vertex in $B$ is in a small number of sets of $\mcF'$. 

\begin{lemma}
For every $z \in B$, $|\mcF'(z)| = O(n^{k-2})$.
\end{lemma}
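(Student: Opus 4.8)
The plan is to exploit the minimality of $b' = |B|$ together with \cref{lem:coveringmatching}. Fix a vertex $z \in B$ and set $B' = B \setminus \{z\}$, which has size $b'-1$. Since $B$ was chosen to be a blocking set of $\mcF'$ of minimal size, $B'$ is not a blocking set of $\mcF'$, so there is a matching $M'$ of $\mcF'$ of size $b'-1$ that covers $B'$ (when $b' = 1$ we simply take $M' = \emptyset$).

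Next I would invoke \cref{lem:coveringmatching}. Shifting does not change the number of sets, so $|\mcF'| = |\mcF| > |\mc E(k, n, b)|$, and $|B| = b' \le b \le s/2$ for $s$ large; hence $B$ has no covering matching in $\mcF'$ at all. In particular $z$ is not covered by $M'$ (otherwise $M'$ itself covers $B$), and no set of $\mcF'$ can both contain $z$ and be disjoint from $\bigcup M'$ — otherwise appending that set to $M'$ would give a matching covering $B$. Therefore every set $F \cup \{z\} \in \mcF'$ with $F \in \mcF'(z)$ meets the vertex set $\bigcup M'$, which has at most $k(b'-1) < kb$ elements; and since $z \notin \bigcup M'$, the $(k-1)$-set $F$ itself must contain one of these at most $kb$ vertices.

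The argument then finishes with a crude count: for each fixed vertex $v$, the number of $(k-1)$-subsets of $[n]$ that contain $v$ but not $z$ is $\binom{n-2}{k-2} = O(n^{k-2})$, so $|\mcF'(z)| \le kb\binom{n-2}{k-2} = O(n^{k-2})$, using that $k$ and $b$ are constants. I do not expect a genuine obstacle here; the only things to verify are the hypotheses of \cref{lem:coveringmatching} (that shifting preserves cardinality, and that $b' < b$ is a constant so $b' \le s/2$ once $s$ is large) and the degenerate case $b' = 1$, where $B = \{z\}$ being a blocking set just says that no set of $\mcF'$ contains $z$, so $\mcF'(z) = \emptyset$ and there is nothing to prove — and indeed the general argument collapses to this, since then $\bigcup M' = \emptyset$ forces $\mcF'(z) = \emptyset$.
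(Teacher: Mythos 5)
Your proof is correct, but it takes a genuinely different route from the paper's. You exploit the minimality of $|B|$ directly: $B\setminus\{z\}$ is not a blocking set of $\mcF'$, hence has a covering matching $M'$ of size $b'-1$, and \cref{lem:coveringmatching} (legitimately applied to $\mcF'$, since shifting preserves cardinality and $b'<b\le s/2$) forces $z\notin\bigcup M'$ and forces every set of $\mcF'$ through $z$ to meet the at most $k(b'-1)$ vertices of $\bigcup M'$; a union bound then gives $|\mcF'(z)|\le k(b'-1)\binom{n-2}{k-2}=O(n^{k-2})$, with the degenerate case $b'=1$ handled as you note. The paper argues differently: it also invokes \cref{lem:coveringmatching} to see that all $k$-sets disjoint from $B$ may be added to $\mcF'$ while $B$ remains a minimal blocking set, then applies the inductive hypothesis of \cref{thm:main} for the smaller parameter $b'<b$ to bound the augmented family by $\binom{n}{k}-b'\binom{n}{k-1}+O(n^{k-2})$, and finally subtracts the $\binom{n-b'}{k}$ added sets, none of which contain $z$. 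Your version is more elementary and gives an explicit constant; notably, it avoids the appeal to the induction on $b$, which (outside the separate induction internal to \cref{prop:graph}) appears to be used only at this point in the proof of \cref{thm:main}, so your argument would slightly streamline the overall structure. Both proofs rest on the same two pillars --- minimality of $B$ and \cref{lem:coveringmatching} --- but the counting mechanisms are genuinely different.
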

\begin{proof}

Note that by \cref{lem:coveringmatching}, there is no matching that covers $B$.
Thus we may add all $k$-sets disjoint from $B$ to $\mcF'$, and $B$ will still be a blocking set of minimal size. By the inductive hypothesis, after adding these sets, the size of $\mcF$ will be at most
\[\binom{n}{k} - b' \binom{n}{k-1} + O(n^{k-2}),\]
by \eqref{eq:sizeE}. Then, since none of the sets which are disjoint from $B$ contain $z$, we have
\[|\mcF'(z)| \le \binom{n}{k} - b' \binom{n}{k-1} + O(n^{k-2}) - \binom{n-b'}{k} = O(n^{k-2}),\]
as desired.
\end{proof}
Thus, since $B$ contains a vertex not in $A$, we have some vertex $z \in [n-2b-1]$ so that $|\mcF'(z)| = O(n^{k-2})$. If $z \neq y$, then \[|\mcF(z)| \le |\mcF'(z)| = O(n^{k-2}).\] If on the other hand $z = y$, then
\begin{align*}
|\mcF(x)| + |\mcF(y)| = |\mcF'(x)| + |\mcF'(y)| \le \binom{n}{k-1} + O(n^{k-2}),
\end{align*}
so either $|\mcF(x)|$ or $|\mcF(y)|$ is at most $\frac 12 \binom{n}{k-1} + O(n^{k-2})$. In any case, we have some $z' \in [n-2b-1]$ such that $|\mcF(z')| \le \frac 12 \binom{n}{k-1} + O(n^{k-2})$. By monotonicity of $|\mcF(i)|$, we also have that $|\mcF(w)| < |\mcF(z')|$ for all $w \in A$. Thus, the total number of sets in $\mcF$ which intersect $A$ is at most
\[\sum_{w \in A} |\mcF(w)| \le \frac{2b+1}{2} \binom{n}{k-1} + O(n^{k-2}).\]
Then, since there are at most $\binom{n-2b-1}{k}$ sets not intersecting $A$, the total size of $\mcF$ is at most
\[\binom{n-2b-1}{k} + \frac{2b+1}{2} \binom{n}{k-1} + O(n^{k-2}) = \binom{n}{k} - \frac{2b+1}{2} \binom{n}{k-1} + O(n^{k-2}) < |\mc E(k, n, b)|.\]

This is a contradiction, so we must indeed have that $\mcF$ is shifted on $[n] \setminus T$. Permuting the vertices so that $T$ is at the end of $[n]$, we have that $\mcF$ is shifted on $[n-c]$, as desired, concluding the proof of \cref{prop:shift}.
\end{proof}

We henceforth assume that $\mcF$ is shifted on $[n-c]$.

\section{\texorpdfstring{$\mcF$}{F} has a blocking set of size exactly \texorpdfstring{$b$}{b}} \label{sec:sizeb}
In this section we will show that $\mcF$ has a blocking set of size exactly $b$. Suppose for a contradiction otherwise, and let $B$ be the smallest blocking set of $\mcF$, where $|B| = b' > b$. We will split into cases based on whether $B$ is small or large.

\subsection{\texorpdfstring{$B$}{B} small}
Here assume that $b' \le s/2 = n/2k$. 

\begin{claim}
$\mcF$ contains a set of the form $\{z\} \cup C$, where $z \in B$ and $C \subset [b'k+1, n-c]$.
\end{claim}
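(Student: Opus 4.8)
We have $\mcF$ shifted on $[n-c]$, with $B$ a minimal blocking set of size $b' > b$ and $b' \le n/2k$. We want to produce a set in $\mcF$ of the form $\{z\} \cup C$ with $z \in B$ and $C \subseteq [b'k+1, n-c]$. The plan is a counting argument: if no such set existed, then $\mcF$ would be forced to be too small to satisfy $|\mcF| > |\mc E(k,n,b)|$, contradicting our standing assumption.

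**The main steps.** First I would observe, via \cref{lem:coveringmatching} (applicable since $b' \le s/2$), that $B$ has no covering matching, so as in the lemma inside the proof of \cref{prop:shift} we may add to $\mcF$ every $k$-set disjoint from $B$ without changing the fact that $B$ is a minimal blocking set; by the inductive hypothesis on $b'$ this bounds $|\mcF|$ above by $\binom{n}{k} - b'\binom{n}{k-1} + O(n^{k-2})$, hence — subtracting the $\binom{n-b'}{k}$ sets disjoint from $B$ — we get $\sum_{z \in B}|\mcF(z)| \le \binom{n}{k} - \binom{n-b'}{k} - b'\binom{n}{k-1} + O(n^{k-2})$, but this alone is not quite the bound we need (the leading terms cancel only to order $n^{k-1}$), so I'd instead work directly with the assumed sets. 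Second, suppose for contradiction that every set of $\mcF$ meeting $B$ in exactly one vertex $z$ must use a vertex of $[b]$ — wait, more precisely: suppose no set $\{z\}\cup C$ with $C \subseteq [b'k+1,n-c]$ lies in $\mcF$. Then for each $z \in B$, every set of $\mcF(z)$ (among those contributing $k$-sets meeting $B$ only in $z$) either meets $B \setminus \{z\}$ again, or meets the small window $[b'k] \setminus B$, or meets the constant-size set $[n-c+1,n]$. The key point is that the number of $(k-1)$-sets avoiding $B$ but meeting $[b'k]\setminus B \cup [n-c+1,n]$ — a set of size $O(1)$ — is $O(n^{k-2})$, and the number meeting $B\setminus\{z\}$ is also $O(n^{k-2})$ since $|B| = b'$ is constant. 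Third, I'd use shiftedness: because $\mcF$ is shifted on $[n-c]$ and $B$ is minimal, one shows $B \subseteq [b'k]$ or at least that $B$ can be taken inside a bounded initial segment, so that "$C \subseteq [b'k+1,n-c]$" really is the generic case and the exceptional sets are genuinely $O(n^{k-2})$ in number.

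**Assembling the contradiction.** Combining, if the claimed set does not exist then $|\mcF(z)| = O(n^{k-2})$ for every $z\in B$, so the number of sets of $\mcF$ meeting $B$ is $O(n^{k-2})$, whence $|\mcF| \le \binom{n-b'}{k} + O(n^{k-2}) = \binom{n}{k} - b'\binom{n}{k-1} + O(n^{k-2})$. Since $b' \ge b+1$, this is strictly less than $\binom{n}{k} - b\binom{n}{k-1} + O(n^{k-2}) - \varepsilon n^{k-1}$, which by \eqref{eq:sizeE} is less than $|\mc E(k,n,b)|$ for $n$ large — contradiction.

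**Main obstacle.** The delicate point is locating $B$: I need that a minimal blocking set of a family shifted on $[n-c]$ lies (or can be rearranged to lie) inside a bounded initial segment of $[n]$, so that the "window" $[b'k]$ genuinely contains $B$ and the complementary window $[b'k+1,n-c]$ is large. If $B$ could have vertices spread across $[n-c]$, the case analysis above would break, since "meeting the window" would no longer be an $O(n^{k-2})$ event. I expect this is handled by a shifting argument: if $z \in B$ with $z$ large, then by \cref{fact:shifted} we could shift $z$ down to a small vertex and check that $B$ remains blocking, contradicting minimality or using up a meaningful shift; making this precise, and verifying it interacts correctly with the at most $c$ unshifted vertices at the top, is the crux of the argument.
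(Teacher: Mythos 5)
There is a genuine gap: your argument implicitly treats $b'$ as a constant, but in this part of the paper $b'$ only satisfies $b < b' \le s/2 = n/(2k)$, so it may grow linearly in $n$. Your key counting assertions fail in that regime: the window $[b'k]\setminus B$ has size $b'(k-1)$, which can be $\Theta(n)$ rather than $O(1)$, so the number of $(k-1)$-sets meeting it is up to $\Theta(n^{k-1})$, not $O(n^{k-2})$; likewise the number of sets meeting $B\setminus\{z\}$ is of order $b'n^{k-2}$, and the expansion $\binom{n-b'}{k}=\binom{n}{k}-b'\binom{n}{k-1}+O(n^{k-2})$ is only valid for small $b'$. So the conclusion ``$|\mcF(z)|=O(n^{k-2})$ for all $z\in B$, hence $|\mcF|$ is too small'' does not follow once $b'=\omega(1)$, and this is exactly the range the claim must also cover. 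The paper's proof avoids this by arguing in the positive direction: the number of candidate sets $\{z\}\cup C$ with $z\in B$ and $C\subset[b'k+1,n-c]$ disjoint from $B$ is at least $b'\binom{n-c-b'k-b'}{k-1}$, and since $b'k\le n/2$ this is $(1+o(1))(b+1)\binom{n}{k-1}$ when $b'\le\sqrt n$ and $\Omega(n^{k-1/2})$ when $b'>\sqrt n$; in either case it exceeds the missing-set budget \eqref{eq:missingE}, so some candidate lies in $\mcF$. Your sketch has no mechanism replacing this two-case treatment of large $b'$.

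Separately, the issue you flag as the ``crux'' --- needing the minimal blocking set $B$ to sit inside a bounded initial segment --- is a red herring and also not something the shifting structure gives you. No localization of $B$ is needed: the window $[b'k+1,n-c]$ has size at least $n/2-c$ regardless of where $B$ lies (since $b'k\le n/2$), and one simply restricts to $C$ avoiding $B$ so that the sets $\{z\}\cup C$ are distinct across $z\in B$. Shiftedness plays no role in this claim; it is used only in the next step, to shift $C$ off the covering matching of $B\setminus\{z\}$.
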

\begin{proof}
For any fixed $z \in B$, the number of possible such sets $\{z\} \cup C$ is at least the number of such sets which contain no vertex in $B$ other than $z$. Thus the total number of possible $\{z\} \cup C$ is at least
\begin{equation} \label{eq:numgoodsets}
b' \binom{n-c-b'k - b'}{k-1} \sim \frac{b' (n-(k+1)b')^{k-1}}{(k-1)!},
\end{equation}
where we have used that $n-(k+1)b' \ge (1-(k+1)/2k)n = \omega(1)$.

Now, if $b' \le \sqrt n$, then the quantity in \eqref{eq:numgoodsets} is at least \[(1+o(1)) (b+1) n^{k-1}/(k-1)!\] (since $b' > b$). If on the other hand $b' > \sqrt n$, then the same quantity is at least
\[(1+o(1)) \frac{\sqrt n ((1-(k+1)/2k)n)^{k-1}}{(k-1)!} = \Omega(n^{k-1/2}).\]
In either case, the number of such sets $\{z\} \cup C$ is greater than the number of missing sets of $\mcF$, which is at most the quantity \eqref{eq:missingE}. Thus, $\mcF$ must contain some such set $\{z\} \cup C$.
\end{proof}

Consider a matching $M$ with $b'-1$ sets covering $B \setminus \{z\}$ (this exists since $B \setminus \{z\}$ is not a blocking set). By \cref{lem:coveringmatching}, $M$ cannot contain $z$. Then this matching covers at most $(b'-1)k$ vertices, so it is possible by repeated application of \cref{fact:shifted} to shift $\{z\} \cup C$ to a set $\{z\} \cup C' \in \mcF$ so that $C'$ is disjoint from $M$. But then we can add $\{z\} \cup C'$ to $M$ to get a matching covering $B$, a contradiction to \cref{lem:coveringmatching}.


Thus we conclude that the smallest blocking set $B$ cannot have size $b'$, where $b < b' \le s/2$.

\subsection{\texorpdfstring{$B$}{B} large} \label{sec:redgraph}
Assume now that $b' > s/2$. We will then find a perfect matching in $\mcF$ in order to derive a contradiction.

\begin{lemma} \label{lem:allsets}
$\mcF$ contains every $k$-vertex subset of $[n-b'+1]$.
\end{lemma}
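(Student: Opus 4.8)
\textbf{Proof proposal for \cref{lem:allsets}.}

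The plan is to argue by a counting/averaging argument that exploits the large size of $\mcF$ together with the fact that $\mcF$ is shifted on $[n-c]$. Suppose, for contradiction, that some $k$-set $S \subseteq [n-b'+1]$ is missing from $\mcF$. First I would use the shiftedness of $\mcF$ on $[n-c]$: since $b' > s/2$, the interval $[n-b'+1]$ sits in the ``early'' part of $[n-c]$, and by repeated application of \cref{fact:shifted}, the missing set $S$ can be shifted downward to the ``smallest'' $k$-set in $[n-c]$; so if $S$ is missing, then every $k$-set that is a coordinatewise-up-shift of $S$ within $[n-b'+1]$ is also missing. Concretely, if $S \subseteq [n-b'+1]$ is missing, then \emph{every} $k$-set $S'$ with $S' \supseteq$-dominated-by-$S$ in the shift order is missing; in particular I expect the cleanest route is to show that if the single set $[n-k+1, n] \cap [n-b'+1]$-analogue — i.e. the $k$ largest elements of $[n-b'+1]$ — is in $\mcF$, then all of $[n-b'+1]^{(k)}$ is in $\mcF$, and otherwise count.

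The main step: if $[n-b'+1]^{(k)} \not\subseteq \mcF$, then the topmost $k$-set $\{n-b'+2-k, \dots, n-b'+1\}$ (the shift-maximal one) is missing, hence by shiftedness on $[n-c]$ (and using $n - b' + 1 \le n - c$, which holds since $b' > s/2 \gg c$) \emph{every} $k$-subset of $[n-b'+1]$ is missing. That would mean $\mcF$ misses at least $\binom{n-b'+1}{k} \ge \binom{n/2}{k} = \Omega(n^k)$ sets, contradicting the bound \eqref{eq:missingE} on the number of missing sets of $\mcF$ (which is $O(n^{k-1})$, and $|\mcF| > |\mc E(k,n,b)|$). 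So in fact the only way $\mcF$ can be large enough is for it to contain \emph{all} of $[n-b'+1]^{(k)}$, which is exactly the claim. Here I should be slightly careful about the direction of the shift order versus which coordinate of $[n]$ indexes the ``heaviest'' vertex; since the vertices were permuted so that $|\mcF(i)|$ is decreasing, the shift operator pushes mass toward small indices, so the shift-maximal $k$-set uses the \emph{largest} available indices, and the monotone-closure statement of \cref{fact:shifted} gives: if the shift-maximal $k$-subset of $[n-b'+1]$ is missing then all $k$-subsets of $[n-b'+1]$ are missing.

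The hard part, and the place to be careful, is verifying that $[n-b'+1] \subseteq [n-c]$ so that \cref{fact:shifted} genuinely applies on the whole interval — but this is immediate since $b' > s/2 = n/(2k)$ gives $n - b' + 1 < n - n/(2k) + 1$, which is comfortably below $n - c$ for $n$ large (as $c = c(k,b)$ is constant), so shiftedness is available throughout $[n-b'+1]$. The remaining work is just the binomial estimate $\binom{n-b'+1}{k} = \Omega(n^k)$ versus \eqref{eq:missingE} $= O(n^{k-1})$, which is routine. I do not anticipate any genuine obstacle here; the lemma is essentially a direct consequence of shiftedness plus the missing-set count, and it sets up the subsequent construction of a perfect matching inside $[n-b'+1]$ together with a small number of sets covering the rest of $[n]$ (to be carried out after this lemma).
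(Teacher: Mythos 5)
Your overall strategy --- shiftedness plus a count of missing sets compared against \eqref{eq:missingE} --- is the same as the paper's, but your key monotonicity step is stated in the wrong direction, and as written the ``main step'' is false. \cref{fact:shifted} says that membership in $\mcF$ is preserved when a \emph{larger} element of a set is replaced by a \emph{smaller} one (both in the shifted region); equivalently, missingness propagates to sets obtained by replacing an element by a larger one. So from ``the topmost $k$-subset $\{n-b'-k+2,\dots,n-b'+1\}$ of $[n-b'+1]$ is missing'' you cannot conclude that every $k$-subset of $[n-b'+1]$ is missing: that would be downward propagation of missingness, which shiftedness does not give (a shifted family can contain $\{1,\dots,k\}$ while missing the topmost subset of $[n-b'+1]$). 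Consequently the count of $\binom{n-b'+1}{k}$ missing sets on which your contradiction rests is unjustified. (Your first paragraph even states the correct upward propagation of missingness, but the decisive step then uses the reverse.)

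The fix is exactly the paper's argument and is one line away from what you wrote: if some $k$-subset of $[n-b'+1]$ is missing, then every $k$-subset of $[n-b'+2,\,n-c]$ is missing, since each such set elementwise dominates the missing one inside the shifted region $[n-c]$ (all of its elements exceed $n-b'+1$). This already gives at least $\binom{b'-c-1}{k}\ge\binom{s/2-c-1}{k}=\Omega(n^k)$ missing sets, contradicting \eqref{eq:missingE}. Equivalently, phrased positively as in the paper: it suffices that $\mcF$ contain a single $k$-subset of $[n-b'+2,\,n-c]$, for then repeated application of \cref{fact:shifted} (replacing its elements one by one with the smaller elements of any target subset of $[n-b'+1]$) shows every $k$-subset of $[n-b'+1]$ lies in $\mcF$. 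Your observations that $[n-b'+1]\subseteq[n-c]$ (so shiftedness applies throughout) and the routine binomial estimate are fine; only the direction of the closure needs correcting.
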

\begin{proof}
By repeated application of \cref{fact:shifted}, it is enough to show that $\mcF$ contains some subset of $[n-b'+2, n-c]$. But if this is not the case, then the number of missing sets of $\mcF$ is at least
\[\binom{b'-c-1}{k} \ge \binom{s/2-c-1}{k} = \Omega(n^k),\]
a contradiction.
\end{proof}

Now, since there are no blocking sets of size smaller than $b'$, there is a matching $M$ in $\mcF$ which covers $[n-b'+2, n]$. By \cref{lem:allsets}, we can then add an arbitrary matching on the vertices not covered by $M$ to obtain a perfect matching in $\mcF$, a contradiction.

We may thus conclude that $B$ also cannot have size $b' > s/2$. Therefore, combining the results from the two cases, the smallest blocking set of $\mcF$ has size exactly $b$.

\section{Reduction to graph problem}
Permute the vertices in $[n]$ again, so that $[b]$ is a blocking set of $\mcF$ and $\mcF$ is shifted on $[b+c+1, n]$. We show the following fact which allows us to ignore most vertices in $[n]$:

\begin{fact} \label{fact:nocoveringG}
Suppose $M$ is a collection of at most $b$ $k$-sets in $\mcF$ which covers $[b]$ and such that no two sets in $M$ both contain the same vertex in $[b+c+bk]$. Then, there exists a matching in $\mcF$ which covers $[b]$.
\end{fact}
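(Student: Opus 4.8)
The plan is to start from the hypothesized collection $M = \{M_1, \dots, M_{b'}\}$ (with $b' \le b$) and repeatedly replace each of its sets by a set of $\mcF$ that still covers the relevant vertex of $[b]$ but avoids all the ``high'' vertices, i.e.\ all vertices outside $[b+c+bk]$ except those needed for the cover. Concretely, I would process the sets $M_1, \dots, M_{b'}$ in order. Suppose we have already modified $M_1, \dots, M_{i-1}$ into pairwise disjoint sets $M_1', \dots, M_{i-1}'$, all contained in $[b+c+bk]$, still covering the corresponding vertices of $[b]$; we want to modify $M_i$. The hypothesis guarantees that no vertex of $[b+c+bk]$ lies in two sets of the original $M$, so $M_i$ meets $M_1' \cup \dots \cup M_{i-1}'$ only possibly in vertices of $[b]$ that $M_i$ was already covering — but since $M$ covers $[b]$ with each element of $[b]$ in exactly one set, in fact $M_i$ is disjoint from $M_1' \cup \cdots \cup M_{i-1}'$ away from its own designated vertex of $[b]$. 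The total number of vertices used by $M_1', \dots, M_{i-1}'$ together with the remaining unmodified $M_{i+1}, \dots, M_{b'}$ and the at most $b$ vertices of $[b]$, intersected with the shifted range $[b+c+1, n]$, is at most $(b-1)k + b < bk + b$; so there are at most $b+c+bk$ ``forbidden'' high vertices, and every vertex of $M_i$ lying in $[b+c+bk+1, n]$ can be shifted down by \cref{fact:shifted} (since $\mcF$ is shifted on $[b+c+1, n]$) to the smallest available vertex of $[b+c+1, n]$, one at a time, always landing on an unforbidden vertex. This produces $M_i' \in \mcF$ with $M_i' \subseteq [b+c+bk]$, disjoint from all previously chosen sets, covering the same vertex of $[b]$.

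After processing all $b'$ sets we obtain pairwise disjoint sets $M_1', \dots, M_{b'}' \in \mcF$, all inside $[b+c+bk]$, whose union covers $[b]$ — that is, a matching in $\mcF$ covering $[b]$, which is exactly the conclusion.

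The one point that needs care — and I expect it to be the main technical obstacle — is ensuring that the shifting-down procedure for $M_i$ never collides with a vertex it must avoid. The count above must be done carefully: when shifting $M_i$ we must avoid the $(i-1)(k-1)$-ish vertices already committed in $M_1', \dots, M_{i-1}'$ (excluding their $[b]$-vertices), the $k$ vertices of each still-unprocessed $M_j$ with $j > i$, and the vertices of $[b]$ not equal to $M_i$'s designated one; since these total strictly fewer than $bk + b$ vertices and all of $[b+c+1, b+c+bk]$ is available, at each single-vertex shift step there is always a legal target below. One must also check that the hypothesis ``no two sets in $M$ share a vertex in $[b+c+bk]$'' is genuinely used: it is what guarantees that replacing a high vertex of $M_i$ does not inadvertently destroy the covering property of an already-fixed $M_j'$, since $M_j'$ and $M_i$ can only have shared a vertex inside $[b+c+bk]$, and the hypothesis forbids that (apart from $[b]$-vertices, which are handled by each appearing in exactly one set). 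Once this bookkeeping is pinned down, the rest is a routine induction on $i$.
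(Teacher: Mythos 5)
Your proposal is correct and follows essentially the same route as the paper: use the shifted structure on $[b+c+1,n]$ (via \cref{fact:shifted}) to relocate, for each set of $M$, its vertices lying above $b+c+bk$ into $[b+c+1, b+c+bk]$ while avoiding the at most $bk$ vertices occupied by the other sets, the hypothesis guaranteeing that the untouched parts inside $[b+c+bk]$ are already pairwise disjoint and still cover $[b]$. The only cosmetic difference is that you shift one vertex at a time with explicit bookkeeping, whereas the paper replaces each high part $F_1$ by a suitable $F_1'\subset[b+c+1,b+c+bk]$ in one step.
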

\begin{proof}
For $F \in M$, write $F = F_0 \sqcup F_1$, where $F_0 = F \cap [b+c+bk]$ and $F_1 = F \cap [b+c+bk+1, n]$. Then, by repeated application of \cref{fact:shifted}, we also have $F_0 \sqcup F_1' \in \mcF$ for any $F_1' \subset [b+c+1, b+c+bk]$ which has the same size as $F_1$ and is disjoint from $F_0$. We can furthermore pick $F_1'$ to be disjoint from all of $M$, since $M$ contains at most $bk$ vertices in total. Thus, we can replace $F$ with $F_0 \sqcup F_1'$ in $M$. Doing this successively to all $F \in M$, we get that $M$ becomes a matching. Note that all vertices in $[b]$ were in $F_0$ and not in $F_1$ at each of these steps, so $M$ still covers $[b]$.
\end{proof}

Now, define $\mc G$ to be the set of all sets $S$ which contain a vertex in the blocking set $[b]$ and such that there exists a set $T \subset [b+c+bk+1, n]$ satisfying $S \cup T \in \mc F$. (Note that all sets in $\mc F$ are automatically in $\mc G$.) By \cref{fact:nocoveringG}, if there is a matching in $\mc G$ which covers $[b]$, then there is also a matching in $\mc F$ which covers $[b]$, and there can be no such matching by \cref{lem:coveringmatching}. (Though $\mc G$ contains sets of different sizes, a matching is similarly defined to be a subcollection of disjoint sets.) Thus, there is no matching in $\mc G$ which covers $[b]$.

Also, note that $\mc G$ contains no single-element sets: if it contained the single-element set $\{z\}$, then there would be no matching in $\mc G$ (and thus no matching in $\mc F$) covering $[b] \setminus\{z\}$, and thus $[b] \setminus\{z\}$ would be a blocking set of size $b-1$, a contradiction.

Now, any set of size $r$ in $\mc G$ corresponds to roughly $\binom{n}{k-r} = \Theta(n^{k-r})$ possible sets in $\mc F$. Since $\mc G$ has no $1$-sets, the highest-order contribution comes from the 2-sets in $\mc G$. Thus we will be interested in bounding the number of 2-sets in $\mc G$.
To this end, define the graph $G$ on the vertices $[n]$ with an edge between any two vertices whenever the set containing those two vertices is in $\mc G$. (Note that every edge in $G$ touches $[b]$.) Without loss of generality (by permuting $[b]$) assume that the vertex $1$ has the highest degree in $G$ out of any vertex in $[b]$.

Now, $\mcF$ must contain a matching which covers $[2, b]$ (and which contains only sets touching $[2, b]$) since it is not a blocking set. But every set in $\mcF$ which intersects $[b]$ is also a set in $\mc G$. Thus, there exists a matching $S_1, \dots, S_a$ of sets in $\mc G$ which covers $[2, b]$ (note that it cannot contain the vertex $1$). We will now use the following proposition about the graph $G$.

\begin{prop} \label{prop:graph}
Suppose that either $k \ge \max \{b+1, 6\}$ or $k \ge 100$. Let $G$ be a (simple, undirected) graph on $[n]$ such that all edges of $G$ touch $[b]$, and $1$ has the greatest degree out of any vertex in $[b]$. Let there also be a collection of mutually disjoint $k$-sets $\alpha_1, \dots, \alpha_a \subset [n]$ which each contain some vertex in $[b]$ and whose union contains all vertices in $[2, b]$. Suppose that it is impossible to cover $[b]$ by a matching which consists only of edges of $G$ and some of the $\alpha_i$. Then, the total number of edges in $G$ is at most $\frac{b(b-1)}{2}(k-1)$.

Moreover, equality holds only in the following case: $a = b-1$, and the $k$-sets $\alpha_1, \dots, \alpha_a$ each contain only one element of $[2, b]$. Up to permutation of the vertices in $[2, b]$, there is an edge from $i$ to every element of the $k$-set that contains $j$, except $j$ itself, for every $1 \le i < j \le b$. These are the only edges in $G$. This equality case is illustrated in \cref{fig:eqcase}.
\end{prop}

\begin{figure}[ht]
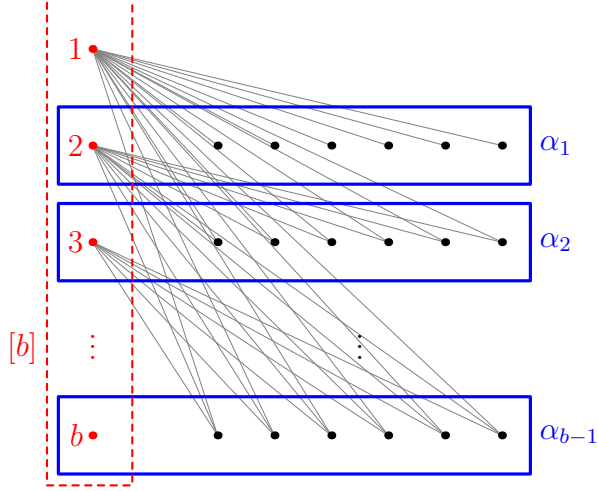

\centering
\begin{asy}
size(0, 6.5cm);
real r = 1.7; // y scale

pen edgepen = grey + 0.3;
for (int y1=2; y1<=4; ++y1) {
	for (int y2=0; y2<y1; ++y2) {
		if (y2==1) continue;
		for (int x=2; x<=7; ++x) {
			draw((-0.2, r*y1)--(x, r*y2), edgepen);
		}
	}
}

for (int x=2; x<=7; ++x) {
	for (int y=0; y<=3; ++y) {
		if (y != 1) { dot( (x,r*y) ); }
	}
}
dot("$1$", (-0.2,4*r), dir(180), red);
dot("$2$", (-0.2,3*r), dir(180), red);
dot("$3$", (-0.2,2*r), dir(180), red);
label("$\vdots$", (-0.2,r), red);
dot("$b$", (-0.2,0), dir(180), red);

label("$\vdots$", (4.5,r));

pen alphapen = blue+1.2;
draw(box((-0.8,3.4*r), (7.5,2.6*r)), alphapen);
draw(box((-0.8,2.4*r), (7.5,1.6*r)), alphapen);
draw(box((-0.8,0.4*r), (7.5,-0.4*r)), alphapen);
label("$\alpha_1$", (7.5,3*r), dir(0), alphapen);
label("$\alpha_2$", (7.5,2*r), dir(0), alphapen);
label("$\alpha_{b-1}$", (7.5,0), dir(0), alphapen);

draw(box((-1,-0.4*r-0.2), (0.5, 4.4*r+0.2)), red+linetype("3 3")+0.8);
label("$[b]$", (-1, 0.9*r), dir(180), red);
\end{asy}
\caption{The only equality case of \cref{prop:graph}, up to permutation of the vertices in $[2, b]$.}
\label{fig:eqcase}
\end{figure}

We will prove \cref{prop:graph} in the next section, but first we will show how it can be used to deduce \cref{thm:main}. Apply \cref{prop:graph} directly to $G$ with $\alpha_i = S_i$; we get that $G$ must have at least $\frac{b(b-1)}{2}(k-1)$ edges. We claim equality must hold; for a contradiction suppose $G$ has strictly fewer edges.

First note that $\mc F$ is missing every set of the form $\{z\} \cup T$, where $z \in [b]$ and $T \subset [b+c+bk+1, n]$. The number of such missing sets is 
\[b\binom{n-b-c-bk}{k-1}.\]
Furthermore, $\mc F$ is also missing every set of the form $\{z, w\} \cup T$, where $z \in [b]$, $w \in [b+c+bk]$, $\{z, w\}$ is not an edge in $G$, and $T \subset [b+c+bk+1, n]$. The number of such edges $\{z, w\}$ is
\[\frac{b(b-1)}{2} + b(c+bk) - |E(G)| \ge b(c+bk) - \frac{b(b-1)}{2}(k-2) + 1.\]
This causes the following additional number of missing sets of $\mc F$:
\[\left( b(c+bk) - \frac{b(b-1)}{2}(k-2) + 1 \right) \binom{n-b-c-bk}{k-2}.\]
Therefore, the total number of missing sets of $\mc F$ is at least
\begin{align*}
&\phantom{=}b\binom{n-b-c-bk}{k-1} + \left( b(c+bk) - \frac{b(b-1)}{2}(k-2) + 1 \right) \binom{n-b-c-bk}{k-2} \\
&= b\left(\binom{n}{k-1} - (b+c+bk)\binom{n}{k-2} + O(n^{k-3})\right) \\ &\qquad \qquad + \left( b(c+bk) - \frac{b(b-1)}{2}(k-2) + 1 \right) \binom{n}{k-2} + O(n^{k-3}) \\
&= b\binom{n}{k-1} - \left(b + \frac{b(b-1)k}{2} - 1\right)\binom{n}{k-2} + O(n^{k-3}),
\end{align*}
which is more than the number of missing sets of $\mc E(k, n, b)$ (as seen in \eqref{eq:missingE}), a contradiction.

Therefore, equality must hold in \cref{prop:graph}, so $\mc G$ must contain $2$- and $k$-sets of the prescribed form. 
Now, note that for any $2$-set $E \in \mc G$, we can add $E \cup S$ to $\mcF$ for any set $S$ disjoint from $[b]$ and $E$ and such that $|S| = k-2$. Recall that $\mc G$ has no matching which covers $[b]$ and $\mc G$ also contains all sets in $\mc F$; thus, $[b]$ is still a blocking set of $\mc F$ after these sets are added. Similarly we can add all $k$-subsets of $[b+1, n]$ to $\mc F$. Finally note that $\mc F$ contains the sets $\alpha_1, \dots, \alpha_{n-1}$. But now, note that $\mc F$ contains all the sets in $\mc E(k, n, b)$, where in the construction of $\mc E(k, n, b)$ in \cref{ex:opt}, the sets $E_i$ are chosen to be $E_i = \alpha_i \setminus \{i\}$.
But then by the maximality of $\mc E(k, n, b)$, we have that $\mcF$ cannot contain any more sets, so $\mc F = \mc E(k, n, b)$, concluding the proof of \cref{thm:main}.

\section{Proof of graph problem} \label{sec:pfgraph}
We will now prove \cref{prop:graph}. We proceed by induction on $b$. The cases $b = 1, 2$ are easy to check, so let $b \ge 3$ and assume \cref{prop:graph} holds for all smaller values of $b$. Suppose that $G$ satisfies the conditions of \cref{prop:graph} and has at least $\frac{b(b-1)}{2}(k-1)$ edges. 

First we introduce some notation. We call the vertices in $[b]$ \textit{special}. We refer to the $k$-sets $\alpha_1, \dots, \alpha_a$ as \textit{cells} (so that there are $a$ cells), and if a cell contains two or more special vertices we say that the special vertices it contains are \textit{siblings}. On the other hand, if a cell contains only one special vertex, we say that special vertex is \textit{lonely}. Also, define the \textit{exterior}, $R$, to be the set of vertices which are not $1$ and are also not in any set $\alpha_i$.

We also define the edge count $e(A, B)$, where $A$ and $B$ are sets of vertices, to be the number of edges between $A$ and $B$. We will sometimes abuse notation by writing a single vertex as an argument of $e$ instead of the set containing only that vertex.

Also, we note that $G$ has no edge connecting the vertex $1$ to any vertex which is not in some $\alpha_i$ -- otherwise that edge would form a covering matching of $[b]$ along with all the sets $\alpha_1, \dots, \alpha_a$. Also, there is no edge connecting $1$ to any lonely vertex, since otherwise that edge would form a covering matching along with the sets $\alpha_i$ which do not contain that vertex.

We now make use of the following lemma of graph theory, which is a generalization of Hall's marriage theorem.

\begin{lemma}
Let $G$ be a graph, and let $A$ be a set of vertices of $G$. Suppose that $G$ has no matching which covers $A$. Then, there exists a set $S \subset A$ such that $|N(S)| \le 2|S|-1$. (Here $N(S)$ is the set of neighbors of $S$, including all points of $S$ itself.)
\end{lemma}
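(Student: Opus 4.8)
The plan is to prove the contrapositive: if $|N(S)| \ge 2|S|$ for every nonempty $S \subseteq A$, then $G$ has a matching covering $A$; equivalently, I will set up the ``covering'' analogue of Tutte's $1$-factor theorem and read the lemma off from it. The first step is to reduce ``$G$ has a matching covering $A$'' to an honest perfect-matching question by a doubling trick: let $\tilde G$ consist of two disjoint copies $G_1, G_2$ of $G$ together with one extra edge $b_1 b_2$ joining the two copies of each vertex $b \notin A$. Then $\tilde G$ has a perfect matching iff $G$ has a matching covering $A$ --- a matching covering $A$ in $G$ can be placed in each copy, with the two copies of each uncovered vertex (necessarily outside $A$) joined by its extra edge, while conversely a perfect matching of $\tilde G$ restricted to $G_1$ must cover every copy of an element of $A$, as those vertices meet no extra edge.

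So suppose $G$ has no matching covering $A$, hence $\tilde G$ has no perfect matching. The copy-swapping map $\sigma$ is an automorphism of $\tilde G$, so the Gallai--Edmonds decomposition of $\tilde G$, and in particular its canonical Tutte barrier, is $\sigma$-invariant; thus there is a barrier of the form $U = \{v_1 : v \in U_0\} \cup \{v_2 : v \in U_0\}$ with $U_0 \subseteq V(G)$, satisfying $o(\tilde G - U) > |U| = 2|U_0|$ (here $o$ counts odd components). Inspecting $\tilde G - U$: a component $K$ of $G - U_0$ that meets $V(G)\setminus A$ has its two copies fused by an extra edge into a single even component of size $2|K|$, whereas a component $K \subseteq A$ contributes two separate components, each of size $|K|$. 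Hence $o(\tilde G - U) = 2c$, where $c$ is the number of odd components of $G - U_0$ contained in $A$, and so $c \ge |U_0| + 1$.

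Finally, let $S$ be the union of those $c$ odd components of $G - U_0$ lying in $A$. Each is a component of $G - U_0$, so all of its neighbours lie within it or within $U_0$; thus $N(S) \subseteq S \cup U_0$, and these $c$ sets are precisely the components of $G[S]$, all odd. Since a graph on $|S|$ vertices has at most $|S|$ components,
\[
|N(S)| \le |S| + |U_0| \le |S| + (c-1) \le 2|S| - 1,
\]
giving the required set. The one genuinely nontrivial ingredient is the symmetrization of the Tutte barrier --- that the covering form of Tutte's theorem admits a ``symmetric'' certificate in the doubled graph, which is where the Gallai--Edmonds structure theorem is really used; I would expect that to need the most care, while the rest is bookkeeping. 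Alternatively one may simply cite the covering form of Tutte's theorem (that $G$ has no matching covering $A$ iff some $U_0$ has more than $|U_0|$ odd components of $G - U_0$ inside $A$) and bypass the doubling entirely.
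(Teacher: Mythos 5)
Your proposal is correct, but it takes a genuinely different route from the paper. The paper argues elementarily: it fixes a matching $M$ covering as many vertices of $A$ as possible, starts from an uncovered vertex $v$, and grows an alternating structure, adding to $T$ each newly reached neighbor and to $S$ its $M$-partner in $A$; maximality of $M$ (via an augmenting-path switch) forces every neighbor of $S$ to lie in $S \cup T$, and since $|T| = |S| - 1$ this gives $N(S) = S \cup T$ with $|N(S)| = 2|S| - 1$ exactly. You instead prove (a symmetric form of) the covering version of Tutte's theorem: the doubling gadget correctly encodes ``matching covering $A$'' as a perfect matching problem, the Gallai--Edmonds barrier $A(\tilde G)$ is canonical and hence invariant under the copy-swapping automorphism, so it has the form $U_0 \times \{1,2\}$, and your component analysis of $\tilde G - U$ (fused even components for components meeting $V \setminus A$, doubled components for those inside $A$) is right, yielding $c \ge |U_0| + 1$ odd components of $G - U_0$ inside $A$; taking $S$ to be their union and using $c \le |S|$ gives $|N(S)| \le |S| + |U_0| \le 2|S| - 1$. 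Your approach buys a stronger structural certificate (in fact the bound $|N(S)| \le |S| + c - 1$, which beats $2|S|-1$ unless all the odd components are singletons, and en route the full Tutte-type characterization of when $A$ can be covered), but it leans on Gallai--Edmonds (or an equivalent canonical-barrier argument) to symmetrize the Tutte set, whereas the paper's alternating-path argument is self-contained, uses nothing beyond the definition of a maximum matching, and is what the paper needs.
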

\begin{proof}
Let $M$ be a matching in $G$ which contains the maximum possible number of vertices in $A$; we may assume that $M$ contains no edges disjoint from $A$. By assumption, $G$ contains a vertex $v$ which is not in $M$.

We construct $S$ in steps, along with a set $T$ which will eventually contain all neighbors of $S$, except those in $S$ itself. Initially, let $S$ contain only the vertex $v$. While there exists any edge from an element of $S$ to another vertex $u$ which is not in $S$ or $T$, if $u$ has a neighbor in $M$ which is also in $A$, add $u$ to $T$ and the neighbor to $S$. (Note that whenever we add one vertex each to $S$ and $T$, they are adjacent in $M$. Thus, the neighbor of $u$ in $M$ cannot already be in $S$ or $T$.)

Now, suppose that after this process has ended, there is still an edge from a vertex $w \in S$ to a vertex $u \notin S, T$. Note that $u$ cannot be adjacent in $M$ to any vertex in $A$, otherwise it would have previously been added to $T$. Now by construction, there is an even-length path connecting $v$ to $w$, where every other vertex (including $v$ and $w$) is in $S$, and the rest are in $T$, and every other edge is an edge of $M$ (so that the last edge touching $w$ is in $M$, but the edge touching $v$ is not). Then, we can flip the parity of the edges on the path which are in $M$, so that the edge touching $v$ is now in $M$, but the edge touching $w$ is not. Now, $M$ contains $v$ but does not contain $w$, but otherwise contains the same vertices. Finally, add the edge $\{w, u\}$ to $M$, and remove any edge touching $u$ in $M$; since an edge touching $u$ in $M$ cannot contain any vertex in $A$, this does not remove any vertex of $A$ from $M$. Thus, $M$ now covers all the vertices in $A$ which it originally covered, in addition to $v$, contradicting its original maximality.

Thus, there is no edge from any vertex in $S$ to a vertex not in $S$ or $T$. Therefore, $N(S) = S \cup T$. Since one vertex is added to $S$ for each vertex added to $T$, we have $|T| = |S| - 1$, so $|N(S)| = 2|S| - 1$ as desired. 
\end{proof}

Using this lemma on $G$ with $A = [b]$, we can find a set $S \subset [b]$ such that $|S| = d$ and $|N(S)| \le 2d - 1$. If $1 \in S$, then since the vertex $1$ has the maximal degree out of $[b]$, this means that there are at most \[b(2d-1) \le b(2b-1) \le \frac{5b(b-1)}{2} \le \frac{b(b-1)}{2}(k-1)\] edges in $G$. Equality cannot hold since that would require $b=d$, and then we would be double-counting the edge between the vertices $1$ and $2$. Thus we are immediately done if $1 \in S$. We will assume henceforth that $1 \notin S$.

Now let $\Gamma$ denote the set of cells $\alpha_i$ which contain some vertex in $S$, and let $|\Gamma| = d'$. Clearly we have $d' \le d$. Also, let $T$ denote the set of special vertices which are siblings of some vertex in $S$. Then, $S \cup T$ is the set of all special vertices in the cells of $\Gamma$. Let $|T| = \ell$. Each cell $\alpha_i$ contains a vertex which is not in $T$ (and which is also not 1), so we have 
\begin{equation} \label{eq:bda}
a \le b - \ell - 1.
\end{equation}

Now, we wish to induct by removing the cells in $\Gamma$. The idea is that since $S$ has few neighbors and therefore low total degree, we will not be removing too many edges. After removing the cells in $\Gamma$ and all edges connected to vertices in these cells, there will still be no matching that covers the remaining special vertices, since otherwise that matching, along with the cells of $\Gamma$, would cover $[b]$. Thus, as long as the vertex $1$ still has the highest degree, we will be able to apply the inductive hypothesis.

We delete the edges touching $\Gamma$ in a specific order that will allow us to more easily bound the number of deleted edges. Specifically, we delete the edges in four steps as follows.

\begin{enumerate}
\item Delete all edges touching $S$. Let $X$ be the set of deleted edges. Since $|N(S)| \le 2d - 1$, we have 
\begin{equation} \label{eq:bdX}
|X| \le d(d-1) + \binom{d}{2} = \frac{3d(d-1)}{2}.
\end{equation}

\item Delete all remaining edges from $[b] \setminus (S \cup T)$ to $\Gamma$. Note that this deletes at most $d'k - d$ edges from each vertex in $[b] \setminus (S \cup T)$.

\item If vertex $1$ is no longer the maximum-degree vertex in $[b] \setminus (S \cup T)$, then arbitrarily delete edges from any such vertices with higher degree (except those touching $1$) until $1$ is again the maximum-degree vertex in $[b] \setminus (S \cup T)$. Let $Y$ be the set of edges deleted in steps 2 and 3 combined. 

Let $p$ be the total number of elements of $S$ which are adjacent to the vertex $1$. Then, the number of edges touching $1$ which were deleted in steps 1 and 2 combined is at most $d'k - d + p$. This means that $Y$ contains at most $d'k - d + p$ edges touching each vertex in $[b] \setminus (S \cup T)$. Thus,
\begin{equation} \label{eq:bdY1}
|Y| \le (b-d-\ell)(d'k - d + p).
\end{equation}

We will now bound $p$. Note that a cell $\gamma \in \Gamma$ which contains $t$ vertices of $S$ contributes at most $t$ to $p$, but only if $\gamma$ contains a sibling, since $1$ is not adjacent to any lonely vertices. If $t \ge 2$, then $\gamma$ also contributes at least $t - 1 \ge t/2$ to the quantity $d - d'$. Otherwise, $t = 1$ and $\gamma$ must contain a vertex not in $S$, in which case $\gamma$ contributes 1 to $\ell$. Therefore, we have the inequality
\[p \le 2(d-d') + \ell.\]
Thus, the above bound on $Y$ becomes
\begin{align}
|Y| &\le (b-d-\ell)(d'k - d + 2(d-d') + \ell) \nonumber \\
&= (b-d-\ell)(d'(k-2)+d) + \ell(b-d-\ell) \nonumber \\
&\le (b-d-\ell)d(k-1) + \ell(b-d-\ell). \label{eq:bdY2}
\end{align}

\item Delete all remaining edges touching $T$. Note that since $1$ was the vertex of maximum degree, this deletes at most $\ell \deg(1)$ edges (where the degree is calculated before any edges were deleted). Let $r$ be the total number of cells $\alpha_i$ which contain siblings. Since $1$ does not have an edge to any lonely vertex, it has at most $k-1$ edges to each cell except for these $r$ cells, so
\[\deg(1) \le a(k-1) + r.\]
Now, each of the $r$ cells which contain siblings contain at least $2$ special vertices (while all the others still contain 1), so since $b-1$ is the number of special vertices which are contained in some cell, this means that $b-1 \ge 2r + (a - r)$. Rearranging, $r \le b - a - 1$. Therefore, using this and \eqref{eq:bda}, the above bound then becomes
\begin{align}
\deg(1) &\le a(k-1) + r \label{eq:bddeg11} \\
&\le a(k-1) + (b-a-1) \nonumber \\
&= a(k-2) + b - 1 \nonumber \\
&\le (b-\ell-1)(k-2) + b - 1 \label{eq:bddeg14} \\
&= (b-\ell)(k-1) + \ell - (k-1). \nonumber
\end{align}
Thus, if $Z$ is the number of edges deleted in this step, then
\begin{equation} \label{eq:bdZ}
|Z| \le \ell \deg(1) \le \ell ((b-\ell)(k-1) + \ell - (k-1)).
\end{equation}
\end{enumerate}

Now, we will apply the inductive hypothesis as described earlier. Note that all vertices touching $\Gamma$ are deleted in steps 1, 2, and 4, and step 3 ensures by deleting some more edges that vertex 1 still has maximal degree. Thus the inductive hypothesis can be applied; since there are $b-d-\ell$ special vertices remaining, the number of edges remaining is at most $\frac{(b-d-\ell)(b-d-\ell-1)}{2}(k-1)$. Then, using \eqref{eq:bdX}, \eqref{eq:bdY2}, and \eqref{eq:bdZ}, the total number of edges originally in $G$ is at most
\begin{align}
&\phantom{=} \frac{(b-d-\ell)(b-d-\ell-1)}{2}(k-1) + |X| + |Y| + |Z| \nonumber \\
&\le \frac{(b-d-\ell)(b-d-\ell-1)}{2}(k-1) + \frac{3d(d-1)}{2} + (b-d-\ell)d(k-1) + \ell(b-d-\ell) \nonumber \\
&\qquad \qquad + \ell ((b-\ell)(k-1) + \ell - (k-1)) \nonumber \\
&\le \frac{(b-d-\ell)(b-d-\ell-1)}{2}(k-1) + \frac{d(d-1)}{2}(k-1) + (b-d-\ell)d(k-1) \nonumber \\
&\qquad \qquad + \ell ((b-\ell)(k-1) + \ell - (k-1) + (b - d - \ell)) \label{eq:a3} \\
&= \frac{(b-\ell)(b-\ell-1)}{2}(k-1) + \ell(b - \ell)(k-1) + \ell(\ell - (k-1) + (b-d-\ell)) \nonumber \\
&= \frac{b(b-1)}{2}(k-1) - \frac{\ell(\ell-1)}{2}(k-1) + \ell(\ell - (k-1) + (b-d-\ell)) \nonumber \\
&\le \frac{b(b-1)}{2}(k-1) - \frac{\ell(\ell-1)}{2}(k-3) + \ell(-(k-2) + (b-d-\ell)) \nonumber \\
&\le \frac{b(b-1)}{2}(k-1) - \frac{\ell(\ell-1)}{2}(k-3) + \ell(-(k-2) + (b-1-1)) \nonumber \\
&= \frac{b(b-1)}{2}(k-1) - \frac{\ell(\ell-1)}{2}(k-3) + \ell(-k+b). \label{eq:a8}
\end{align}

If $k \ge b+1$, then this is at most $\frac{b(b-1)}{2}(k-1)$, the desired bound. To complete the proof of the $k \ge \max\{b+1, 6\}$ case, it remains to check that the equality case is as described in \cref{prop:graph}. 

To this end, suppose equality holds. In order to achieve equality in the very last step, we must have $\ell = 0$. Also, in order to have equality at \eqref{eq:a3}, we also have $d=1$. Without loss of generality suppose that $S = \{b\}$, and that $\alpha_{b-1}$ contains the vertex $b$ (and no other special vertices, since $\ell = 0$). Also, by the inductive hypothesis, the edges not touching $\alpha_{b-1}$ take exactly the form in \cref{prop:graph} (but for $b-1$ instead of $b$). It remains to check that the edges touching $\alpha_{b-1}$ are exactly those that go from $[b-1]$ to vertices in $\alpha_{b-1}$, except the vertex $b$ itself. However, note that 1 has no edge to $b$ (since $b$ is lonely), and any vertex $i$ in $[2, b-1]$ cannot have an edge to $b$ since then one could make a matching consisting of the edge from $i$ to $b$, all the cells except $\alpha_{b-1}$ and the one containing $i$, and any edge from $1$ to the cell containing $i$ (this exists by the inductive hypothesis), and this would form a covering matching of $[b]$. Thus, there are no edges containing $b$.

On the other hand, equality must hold in \eqref{eq:bdY1}. Since $d'=d=1$ and $p=0$ (since all vertices are lonely), this means that in steps 2 and 3, $k-1$ edges were deleted from each vertex in $[b-1]$. Recall that (by the inductive hypothesis) after deletion of edges, every vertex in $[2, b-1]$ has strictly lower degree than $1$. Therefore, no edges could have been deleted in step 3, so $k-1$ edges were deleted from each vertex in $[b-1]$ in step 2. Thus, every vertex in $[b-1]$ must have been connected to every vertex in $\alpha_{b-1}$ except $b$. Finally, note that \eqref{eq:bdX} implies that the vertex $b$ has no edges at all. This completes the analysis of the equality case; we have shown that the only deleted edges are those from $[b-1]$ to $\alpha_{b-1}$ (but not to $b$), and thus the graph $G$ takes exactly the form described in \cref{prop:graph}. This completes the proof of \cref{prop:graph} in the $k \ge \max\{b+1, 6\}$ case.

Now, to prove the $k \ge 100$ case of \cref{prop:graph}, we assume henceforth that $100 \le k \le b$ (since $b < k$ is already covered by the previous case). We bound $|Y|$ in a slightly different way; let $\Phi$ be the set of cells in $\Gamma$ whose vertices are all adjacent to $1$, and let $q = |\Phi|$. (Note that every cell in $\Phi$ must contain siblings, since $1$ is not adjacent to any lonely vertex; we will use this fact later). Then, the number of edges touching 1 which were deleted in steps 1 and 2 is at most $d'(k-1) + q$ (since there are at most $k-1$ edges from 1 to any cell whose vertices aren't all adjacent to it). It is still the case that $d'(k-1)+q$ is at least $d'-d$, which is the maximum number of edges deleted from any vertex in step 2. Thus, as before, there are at most $d'(k-1)+q$ edges deleted from each vertex in $[b] \setminus (S \cup T)$ in steps 2 and 3 combined, so we have the bound
\begin{equation} \label{eq:bdY3}
|Y| \le (b-d-\ell)(d'(k-1) + q).
\end{equation}
Using the bound \eqref{eq:bdY3} instead of \eqref{eq:bdY2}, the bound on the number of edges in $G$ becomes
\begin{align}
&\phantom{=} \frac{(b-d-\ell)(b-d-\ell-1)}{2}(k-1) + |X| + |Y| + |Z| \nonumber \\
&\le \frac{(b-d-\ell)(b-d-\ell-1)}{2}(k-1) + \frac{3d(d-1)}{2} + (b-d-\ell)(d'(k-1) + q) \nonumber \\
&\qquad \qquad + \ell ((b-\ell)(k-1) + \ell - (k-1)) \label{eq:b2} \\
&\le \frac{(b-d-\ell)(b-d-\ell-1)}{2}(k-1) + \frac{d(d-1)}{2}(k-1) + (b-d-\ell)(d(k-1) + q) \nonumber \\
&\qquad \qquad + \ell ((b-\ell)(k-1) + \ell - (k-1)) \label{eq:b3} \\
&= \frac{(b-\ell)(b-\ell-1)}{2}(k-1) + \ell(b - \ell)(k-1) + (b-d-\ell)q + \ell(\ell - (k-1)) \nonumber \\
&= \frac{b(b-1)}{2}(k-1) - \frac{\ell(\ell-1)}{2}(k-3) - \ell(k-1) + (b-d-\ell)q \nonumber \\
&\le \frac{b(b-1)}{2}(k-1) - \frac{\ell(\ell-1)}{2}(k-3) - \ell(k-1) + bq \nonumber \\
&\le \frac{b(b-1)}{2}(k-1) + bq. \label{eq:b7}
\end{align}
Now, if $q = 0$, then again the desired bound is achieved, and it remains to check the equality case. The analysis of the equality case is exactly the same as in the $k \ge \max\{b+1, 6\}$ case, and so we are done if $q = 0$.

Thus, suppose henceforth that $q > 0$. We will show that either the bound \eqref{eq:b7} can be reduced by more than $bq$, or that we can find a matching which covers $[b]$, in both cases obtaining a contradiction. To do the first of these, it is enough that any step of the above chain of inequalities has a difference of more than $bq$.

Now, note that \eqref{eq:b7} is within $bq$ of its assumed lower bound; this means that there is a lot of rigidity in the above chain of inequalities. This means that we will be able to find a lot of structure in $G$, or else one of the steps in the chain of inequalities will be able to be reduced by more than $bq$. To this end, we show some claims about $G$. (All claims that follow are assumed to be stated before any edges in $G$ are deleted.)


\begin{claim}
$q \le d < 0.031b$.
\end{claim}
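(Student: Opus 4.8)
The plan is to prove the two parts of the claim separately, starting with $q \le d$ and then upgrading to the quantitative bound $d < 0.031b$.

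First, to see $q \le d$: recall $\Phi$ is the set of cells in $\Gamma$ all of whose vertices are adjacent to $1$, and that every such cell must contain a pair of siblings (since $1$ has no edge to a lonely vertex). In particular every cell of $\Phi$ contains at least one special vertex other than the one(s) in $S$ — but more to the point, I want to charge each cell of $\Phi$ to a distinct element of $S$. Since a cell in $\Phi$ has all of its (at most $k$) vertices adjacent to $1$, and $1$ is adjacent only to vertices lying in cells (not to the exterior) and only to non-lonely special vertices, a cell $\gamma \in \Phi$ in particular contains some special vertex adjacent to $1$; the point is to show it contains a vertex of $S$. This is where I expect to lean on the rigidity: if some cell of $\Phi$ contained no vertex of $S$, then it would not be in $\Gamma$ at all (contradiction), so every cell of $\Phi$ meets $S$, and since the cells are disjoint and $|S| = d$, we get $q = |\Phi| \le |\Gamma| = d' \le d$. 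Actually $\Phi \subseteq \Gamma$ by definition, so $q \le d' \le d$ is essentially immediate; the real content is the second inequality.

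For $d < 0.031b$, the strategy is to feed the structural consequences of near-equality in \eqref{eq:b7} back in. Since the final displayed bound is within $bq$ of the assumed lower bound $\frac{b(b-1)}{2}(k-1)$, every inequality in the chain from \eqref{eq:b2} to \eqref{eq:b7} must be slack by at most $bq$. I would track the slack in the step where $\frac{3d(d-1)}{2}$ was relaxed to $\frac{d(d-1)}{2}(k-1)$ (going from \eqref{eq:b2}/\eqref{eq:b3}): this loses $\frac{d(d-1)}{2}(k-4)$, which must therefore be at most $bq \le bd$. Combined with $k \ge 100$ this gives $\frac{(d-1)(k-4)}{2} \le b$, i.e. $d - 1 \le \frac{2b}{k-4} \le \frac{2b}{96}$, which yields $d < 0.031b$ for $b$ large; one checks the small-$b$ cases (where $d \le b$ forces $d$ tiny) directly. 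The numerical constant $0.031$ is presumably chosen precisely so that $\frac{2}{k-4} + \frac1b \le 0.031$ holds across the relevant range $k \ge 100$.

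The main obstacle I anticipate is bookkeeping the slack carefully: one has to be sure that $q > 0$ genuinely forces $d \ge 1$ (so $d(d-1)$ is meaningfully positive only once $d \ge 2$), and handle $d = 1$ as a separate trivial case where $d < 0.031b$ holds automatically for $b \ge 33$ while $b \le 32$ combined with $q \le d = 1$ and re-examining \eqref{eq:b7} gives a direct contradiction or the conclusion. I would also double-check that the $\ell$-terms (which only help, being subtracted) don't interfere, and that the relaxation $(b-d-\ell)q \le bq$ doesn't eat the budget I need. Once $d < 0.031b$ is in hand, subsequent claims in this section will presumably use it to show $1$ cannot in fact be adjacent to all vertices of any cell, contradicting $q > 0$.
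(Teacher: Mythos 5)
Your proposal is correct and takes essentially the same route as the paper: both arguments use $q\le d$ (immediate from $\Phi\subseteq\Gamma$) and then exploit the slack $\frac{d(d-1)}{2}(k-4)$ in the step from \eqref{eq:b2} to \eqref{eq:b3}, comparing it with the allowed budget $bq$ to force $d<0.031b$ (the paper phrases this as a contradiction from $d\ge 0.031b$ rather than solving the inequality directly). The only remark is that the ``small-$b$'' cases you set aside do not arise, since this part of the proof assumes $100\le k\le b$, so $b\ge 100$ and the estimate $\frac{2}{k-4}+\frac1b\le 0.031$ holds throughout.
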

\begin{proof}
Clearly $q \le d$, since every cell in $\Phi$ (which is a subset of $\Gamma$) must have a vertex in $S$. Suppose for a contradiction that $d \ge 0.031b$. Then we also have $d-1 \ge 0.021b$. Then the quantity in \eqref{eq:b3} exceeds the previous quantity by \[\frac{d(d-1)}{2}(k-4) \ge \frac{(k-4)(d-1)}{2}q \ge \frac{96(0.021b)}{2}q > bq,\] as long as $k, b$ are sufficiently large. This is a contradiction.
\end{proof}

\begin{claim}
$\ell < 0.025b$.
\end{claim}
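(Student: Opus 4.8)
The plan is to argue as in the preceding claim: supposing for contradiction that $\ell\ge 0.025b$, I will exhibit one step of the chain \eqref{eq:b2}--\eqref{eq:b7} whose increment alone exceeds $bq$, which is impossible because the total slack in that chain is at most $bq$.

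I first record the slack-accounting. The first line of the chain is an upper bound for $|E(G)|$ (the inductive hypothesis bounds the surviving graph and $|X|+|Y|+|Z|$ counts the deleted edges exactly), hence is at least $\frac{b(b-1)}{2}(k-1)$ by our standing assumption, while \eqref{eq:b7} equals $\frac{b(b-1)}{2}(k-1)+bq$. Since every step of the chain is a ``$\le$'', the values run nondecreasingly from the first line to \eqref{eq:b7}, so the sum of all the per-step increments is at most $bq$; in particular no single step increments by more than $bq$. Next I use the step \eqref{eq:b2}$\to$\eqref{eq:b3}, whose increment is $\frac{d(d-1)}{2}(k-4)+(b-d-\ell)(d-d')(k-1)\ge \frac{d(d-1)}{2}(k-4)$. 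This forces $\frac{d(d-1)}{2}(k-4)\le bq\le bd$ (using $q\le d$), hence (dividing by $d\ge 1$) $d-1\le \frac{2b}{k-4}$, so that $bq\le bd\le b+\frac{2b^2}{k-4}$.

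Finally, the increment at the last step \eqref{eq:b7} is exactly $\frac{\ell(\ell-1)}{2}(k-3)+\ell(k-1)$, which is therefore at most $bq\le b+\frac{2b^2}{k-4}$. On the other hand, if $\ell\ge 0.025b$ then, since $b\ge 100$ gives $\ell\ge 0.025b>1$ and $x(x-1)$ is increasing on $[1,\infty)$, we have $\frac{\ell(\ell-1)}{2}(k-3)+\ell(k-1)\ge 0.0003125(k-3)b^2+0.0125(k+1)b$; an elementary check shows that for $k\ge 100$ and $b\ge k$ both of the coefficients $0.0003125(k-3)-\frac{2}{k-4}$ and $0.0125(k+1)-1$ are strictly positive, so this quantity strictly exceeds $b+\frac{2b^2}{k-4}$, a contradiction. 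Hence $\ell<0.025b$.

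The one real point here is the second step: replacing the crude bound $q\le d<0.031b$ from the previous claim by the estimate $bq=O(b^2/k)$ (extracted from the slack at \eqref{eq:b2}$\to$\eqref{eq:b3}) is exactly what makes the constant $0.025$ work uniformly in $b\ge k\ge 100$, since $q\le d<0.031b$ alone is too lossy once $b$ is large relative to $k$. Everything else is bookkeeping on the already-established chain, plus one elementary inequality in $b$ and $k$.
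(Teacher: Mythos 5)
Your proof is correct, and although it works inside the same slack-accounting framework as the paper (the increments along the chain from \eqref{eq:b2} to \eqref{eq:b7} are nonnegative and sum to at most $bq$), the key estimate is genuinely different. The paper's one-line argument bounds $bq$ crudely through the previous claim ($q \le d < 0.031b$) and compares only the $\tfrac{\ell(\ell-1)}{2}(k-3)$ part of the final increment against $0.031b^2$; you instead extract the sharper bound $bq \le bd \le b + \tfrac{2b^2}{k-4}$ from the increment at the step \eqref{eq:b2} to \eqref{eq:b3}, and you retain both the $\ell(k-1)$ term and the exact quadratic lower bound $\ell(\ell-1) \ge 0.025b\,(0.025b-1)$. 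This extra care is not cosmetic: the paper's displayed inequality $\tfrac{\ell(\ell-1)}{2}(k-3) > 0.000375(k-3)b^2$ overstates the constant by a factor of $2$ (from $\ell \ge 0.025b$ and $\ell-1 \ge 0.015b$ one only gets $0.0001875(k-3)b^2$), and with the corrected constant the comparison against $0.031b^2$ fails when $k$ is close to $100$ and $b$ is large. Your coefficient checks ($0.0003125(k-3) > \tfrac{2}{k-4}$ and $0.0125(k+1) > 1$ for $k \ge 100$, $b \ge k$) are accurate, the slack bookkeeping (each individual increment is at most the total slack $bq$, used once for the \eqref{eq:b2}--\eqref{eq:b3} step and once for the last step) is sound, and the auxiliary facts you rely on ($q \le d$, $d \ge 1$, $d' \le d$, $d+\ell \le b$) all hold at that point in the argument. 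So your route is valid uniformly for $b \ge k \ge 100$ and in fact repairs a numerical slip in the paper's own justification of this claim.
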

\begin{proof}
Suppose $\ell \ge 0.025b$. Then $\ell-1 \ge 0.015b$. Then, \eqref{eq:b7} exceeds the previous line by at least \[\frac{\ell(\ell-1)}{2}(k-3) > 0.000375(k-3)b^2 > 0.031b^2 \ge bq,\]
a contradiction.
\end{proof}

\begin{claim} \label{claim:sibS}
Fewer than $q/2$ cells contain two siblings both in $S$.
\end{claim}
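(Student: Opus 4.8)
The plan is to bound the number of cells with two siblings in $S$ by the integer $d-d'$, and then to show that $d-d'$ is forced to be much smaller than $q/2$ because the chain \eqref{eq:b2}--\eqref{eq:b7} is nearly tight.

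First, I would note that a cell contains two siblings both in $S$ exactly when it contains at least two vertices of $S$. Writing $t_\gamma$ for the number of vertices of $S$ in a cell $\gamma \in \Gamma$, we have $\sum_{\gamma \in \Gamma} t_\gamma = d$ over $d'$ cells with $t_\gamma \ge 1$, so $\sum_{\gamma \in \Gamma}(t_\gamma - 1) = d - d'$; since each cell with $t_\gamma \ge 2$ contributes at least $1$ to this sum, the number of cells containing two siblings in $S$ is at most $d - d'$. Thus it suffices to prove $d - d' < q/2$.

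Next, recall $G$ has at least $\frac{b(b-1)}{2}(k-1)$ edges, while \eqref{eq:b7} bounds this same quantity by $\frac{b(b-1)}{2}(k-1) + bq$; hence the entire chain from \eqref{eq:b2} to \eqref{eq:b7} has total slack at most $bq$, and in particular the single step from \eqref{eq:b2} to \eqref{eq:b3} increases the bound by at most $bq$. Carrying out that subtraction, the increase equals $\frac{d(d-1)}{2}(k-4) + (b-d-\ell)(d-d')(k-1)$, and both summands are nonnegative, so $(b-d-\ell)(d-d')(k-1) \le bq$. Using the two previous claims $d < 0.031b$ and $\ell < 0.025b$, we get $b - d - \ell > 0.94b$, hence $(d-d')(k-1) < bq/(0.94b) < 1.07q$; since $k \ge 100$ this gives $d - d' < 1.07q/99 < q/2$, with plenty of room to spare. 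Combined with the counting bound above, fewer than $q/2$ cells contain two siblings in $S$.

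There is no serious obstacle here: the statement is a short consequence of the rigidity of the inequality chain, and the only point requiring care is identifying that the \eqref{eq:b2}-to-\eqref{eq:b3} step is precisely the one isolating $d-d'$ and that the earlier bounds on $d$ and $\ell$ (proved for exactly this purpose) make $b-d-\ell$ close enough to $b$ for the $k \ge 100$ hypothesis to close the estimate.
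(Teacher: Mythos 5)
Your proof is correct and rests on the same idea as the paper's: the number of cells with two $S$-siblings is at most $d-d'$, and $d-d'$ is controlled by the near-tightness of the \eqref{eq:b2}$\to$\eqref{eq:b3} step, whose exact gap $\frac{d(d-1)}{2}(k-4)+(b-d-\ell)(d-d')(k-1)$ you compute correctly and bound by $bq$. The only difference is cosmetic: the paper argues contrapositively ($d-d'\ge q/2$ lets one replace $d'(k-1)+q$ by $d(k-1)$, strictly, killing the $bq$ term in \eqref{eq:b7}) and needs neither $k\ge 100$ nor the earlier claims $d<0.031b$, $\ell<0.025b$, whereas your direct quantitative version invokes them — both are valid here since those claims precede this one and $q>0$, $100\le k\le b$ are in force.
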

\begin{proof}
Suppose otherwise. Then we have $d - d' \ge q/2$, so we can bound $d'(k-1) + q < d(k-1)$, and thus we can replace the $d(k-1) + q$ term in \eqref{eq:b3} with just $d(k-1)$ (and make the inequality strict). If we follow through the chain of inequalities, this means we can remove the $bq$ term in \eqref{eq:b7}, thus obtaining a contradiction (since the inequality was strict).
\end{proof}

Let $\Phi'$ then be the set of cells in $\Phi$ which contain only one vertex of $S$, so that $|\Phi'| > q/2$ by the preceding claim.

\begin{cor}
$\ell > q/2$. (In particular, $\ell$ is positive.)
\end{cor}
\begin{proof}
Recall that all cells in $\Phi$ contain siblings, so every cell in $\Phi'$ must contain a vertex in $T$. Thus $\ell = |T| > q/2$.
\end{proof}

\begin{claim} \label{claim:fewsibs}
Fewer than $0.1b$ special vertices have siblings.
\end{claim}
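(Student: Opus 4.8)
The plan is to argue by contradiction: suppose at least $0.1b$ special vertices lie in cells containing siblings, and extract enough structure to either beat the bound \eqref{eq:b7} by more than $bq$ or to build a matching covering $[b]$. The first observation is a counting one: if $r$ denotes the number of cells containing siblings, then (as already derived in step 4) $r \le b - a - 1$, and the number of special vertices with siblings is at least $2r$. So the hypothesis $\ge 0.1b$ special vertices with siblings forces $r \ge 0.05b$, and hence $a \le b - 1 - 0.05b = 0.95b - 1$; equivalently $a$ is noticeably smaller than $b-1$. I would then revisit the place in step 4 where we used $a \le b - \ell - 1$ (inequality \eqref{eq:bddeg14}) and the place in step 2/3 where $a$ enters; a smaller $a$ means $\deg(1) \le a(k-2) + b - 1$ is smaller than the bound actually used by roughly $(0.05b)(k-2)$, which propagates through $|Z| \le \ell\deg(1)$ to an improvement of about $0.05b(k-2)\ell$ in \eqref{eq:bdZ}, hence in \eqref{eq:b7}.

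The key point is then to show $\ell$ is not too small, so that this improvement actually exceeds $bq$. We already know $\ell > q/2$ from the corollary, but that only gives an improvement of order $b(k-2)q/10$, which does beat $bq$ once $k \ge 100$ — so in fact this may already close the argument. More carefully: the slack gained is at least $\ell\big((a_{\text{true}} - a_{\text{used}})(k-2)\big) \ge (q/2)\cdot(0.05b)\cdot(k-2)$, and since $k \ge 100$ this is at least $(q/2)(0.05b)(98) = 2.45\,bq > bq$, a contradiction. Thus I expect the whole claim to follow by tracking the quantity $a$ through inequalities \eqref{eq:bddeg11}–\eqref{eq:bddeg14} and re-running the chain \eqref{eq:b2}–\eqref{eq:b7} with the sharper bound on $\deg(1)$.

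The main obstacle I anticipate is bookkeeping around the case $\ell$ very small relative to $q$ — but since $\ell > q/2 > 0$ and all of $\ell, q, d$ are now known to be $o(b)$ (from the preceding claims, $d < 0.031b$, $\ell < 0.025b$), the "correction terms" like $\ell(b-d-\ell)$ and $(b-d-\ell)q$ stay comfortably below $bq$ in magnitude compared to the $b(k-2)\ell$ gain, so they do not interfere. A secondary subtlety is making sure the improvement in $\deg(1)$ is not already "spent" elsewhere in the chain; to avoid double-counting I would isolate it cleanly at the $|Z|$ step, where $\deg(1)$ enters only once. So the structure of the argument is: (i) hypothesis $\Rightarrow r \ge 0.05b \Rightarrow a \le 0.95b - 1$; (ii) hence $\deg(1)$ is smaller than the bound used by $\ge 0.05b(k-2)$; (iii) hence \eqref{eq:b7} improves by $\ge \ell \cdot 0.05b(k-2) > (q/2)(0.05b)(98) > bq$; (iv) contradiction with the assumed lower bound $\frac{b(b-1)}{2}(k-1)$ on $|E(G)|$.
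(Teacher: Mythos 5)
Your overall strategy is the one the paper uses: force $a$ to be noticeably smaller than $b-\ell-1$, thereby improving \eqref{eq:bddeg14}, propagate the improvement through $|Z| \le \ell\deg(1)$ into \eqref{eq:bdZ} and \eqref{eq:b2}--\eqref{eq:b7}, and use $\ell > q/2$ together with $k \ge 100$ to beat $bq$. However, your step (i) is invalid as written. From ``each of the $r$ sibling-containing cells contains at least two special vertices with siblings'' you get that the number of special vertices with siblings is at least $2r$, i.e.\ an upper bound $r \le \tfrac12(\text{count})$ --- not the lower bound $r \ge 0.05b$ that you assert. Since this claim is only invoked in the regime $100 \le k \le b$, a single cell can contain up to $\min\{k,b-1\}$ special vertices, so all $0.1b$ sibling vertices could be packed into as few as $\lceil 0.1b/k \rceil$ cells; for instance with $k = 100$ and $b = 10^6$ one can have $r = 10^3$ while $0.05b = 5\cdot 10^4$. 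So $r \ge 0.05b$ can fail badly, and with it your route $r \le b-a-1 \Rightarrow a \le 0.95b-1$ collapses.

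The conclusion you want, $a \le b - 1 - 0.05b$, is nevertheless true, and the repair is exactly the paper's one-line count: $b-1-a = \sum_{\text{cells}}(t-1)$, where $t$ is the number of special vertices in the cell, and $t-1 \ge t/2$ for every sibling-containing cell, so $b-1-a \ge \tfrac12(\text{number of special vertices with siblings}) \ge 0.05b$. With that substitution, your steps (ii)--(iv) coincide with the paper's proof, up to one constant you should correct: the improvement in \eqref{eq:bddeg14} is $(0.05b-\ell)(k-2) > 0.025b(k-2)$, not $0.05b(k-2)$, because the bound being replaced uses $b-\ell-1$ rather than $b-1$; multiplying by $\ell > q/2$ still yields a reduction of more than $0.0125(k-2)bq > bq$ for $k \ge 100$, so the contradiction with \eqref{eq:b7} stands. (Your concern about the improvement being ``spent elsewhere'' is moot, since $\deg(1)$ enters the chain only through \eqref{eq:bdZ}.)
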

\begin{proof}
Suppose that at least $0.1b$ special vertices have siblings. Note that $b-1-a$ is equal to the sum over all cells of $t-1$, where $t$ is the number of special vertices in the cell. Since $t-1 \ge t/2$ whenever the cell has siblings, this means that $b - 1 - a \ge 0.05b$. Rearranging, $a \ge b - 0.05b - 1$, so the bound \eqref{eq:bddeg14} can be reduced by $(0.05b - \ell)(k-2) > 0.025(k-2)b$. Consequently, the bound \eqref{eq:bdZ} (and thus also \eqref{eq:b2}) can be reduced by $0.025 (k-2) \ell b > 0.0125(k-2)bq > bq$, a contradiction.
\end{proof}

\begin{cor} \label{cor:abig}
$a > 0.89b$.
\end{cor}
\begin{proof}
Each lonely vertex is in its own cell, so $a$, the number of cells, is at least the number of lonely siblings, which by \cref{claim:fewsibs} is greater than $0.9b-1 > 0.89b$.
\end{proof}

\begin{claim} \label{claim:vtxThighdeg}
One of the vertices in $T$ has degree greater than $ak - 3b$.
\end{claim}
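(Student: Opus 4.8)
The approach is a contradiction argument that reuses the edge-count chain \eqref{eq:b2}--\eqref{eq:b7}. Suppose every vertex of $T$ has degree at most $ak-3b$ in $G$. Since we are in the case $q>0$, the corollary just before this claim gives $\ell > q/2$, so $T$ is nonempty and the statement is not vacuous.

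The degrees of the vertices of $T$ enter the bound on $|E(G)|$ only through $|Z|$, the set of edges deleted in step 4: that step deletes all remaining edges touching $T$, so $|Z| \le \ell\cdot\max_{v\in T}\deg(v) \le \ell(ak-3b)$ under our hypothesis. The bound on $|Z|$ actually used in \eqref{eq:bdZ} (and hence in \eqref{eq:b2}) is $\ell\bigl((b-\ell)(k-1)+\ell-(k-1)\bigr) = \ell\bigl((b-\ell-1)(k-1)+\ell\bigr)$; using $a\le b-\ell-1$ from \eqref{eq:bda}, our new bound is smaller than this by
\[
\ell\bigl[(b-\ell-1-a)k + 2b + 2\ell + 1\bigr] \ge \ell(2b+2\ell+1) > 2\ell b > bq,
\]
where the last step uses $\ell>q/2$. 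Re-running the chain \eqref{eq:b2}--\eqref{eq:b7} with this improved bound on $|Z|$ (which enters every line additively) lowers the right-hand side of \eqref{eq:b7} by more than $bq$, yielding
\[
|E(G)| < \frac{b(b-1)}{2}(k-1) + bq - bq = \frac{b(b-1)}{2}(k-1),
\]
which contradicts the assumption that $G$ has at least $\frac{b(b-1)}{2}(k-1)$ edges. Hence some vertex of $T$ has degree greater than $ak-3b$.

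I do not expect a real obstacle here: this is essentially the old inequality with a sharper estimate on a single term. The one delicate point is the arithmetic showing the improvement genuinely beats the full surplus $+bq$ of \eqref{eq:b7} and not merely half of it; this comes down to each vertex of $T$ costing us roughly $2b$ fewer edges (the factor $2b+2\ell+1\ge 2b$ above), which is exactly what pairs with the bound $\ell > q/2$. A cruder estimate yielding only $\sim b$ per vertex of $T$ would fall just short.
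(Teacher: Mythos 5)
Your proof is correct and follows essentially the same route as the paper: assume all vertices of $T$ have degree at most $ak-3b$, observe this lowers the bound \eqref{eq:bdZ} on $|Z|$ by more than $2\ell b > bq$ (using $\ell > q/2$), and propagate through \eqref{eq:b2}--\eqref{eq:b7} to contradict $|E(G)| \ge \frac{b(b-1)}{2}(k-1)$. The only cosmetic difference is that you compare against the final expression $\ell\bigl((b-\ell-1)(k-1)+\ell\bigr)$ via \eqref{eq:bda}, while the paper compares per-vertex against \eqref{eq:bddeg11} using $a(k-1)+r-2b \ge ak-3b$; the arithmetic is equivalent.
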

\begin{proof}
Suppose otherwise, and that all vertices in $T$ have degree at most $ak-3b$. We have $ak - 3b \le a(k-1) + r - 2b$, so the bound of \eqref{eq:bddeg11} exceeds the number of edges deleted from each vertex in $T$ by at least $2b$. Thus the bound $\eqref{eq:bdZ}$ can be reduced by at least $2 \ell b > bq$, a contradiction.
\end{proof}

\begin{cor}
$\deg(1) > ak - 3b > 0.89kb - 3b$.
\end{cor}
\begin{proof}
Since $1$ has maximum degree in $[b]$, this follows directly from \cref{claim:vtxThighdeg} and \cref{cor:abig}.
\end{proof}

Define a vertex in $[b] \setminus (S \cup T)$ to be \textit{good} if its degree is less than $0.95kb - 4b$. In particular, if a vertex is good, then its degree is less than $\deg(1) - b$.

\begin{claim} \label{claim:manygood}
There are at least $0.334b$ good vertices.
\end{claim}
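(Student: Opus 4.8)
The plan is to prove the claim by a global double count of the degree sum $\sum_{v\in[b]}\deg(v)$, bounding it above using the edge count in \eqref{eq:b7} and below using the hypothesis that good vertices are scarce.

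For the upper bound, I would use that every edge of $G$ touches $[b]$, so $\sum_{v\in[b]}\deg(v)$ equals $|E(G)|$ plus the number of edges with both endpoints in $[b]$, and the latter is at most $\binom{b}{2}$. Substituting $|E(G)|\le\frac{b(b-1)}{2}(k-1)+bq$ from \eqref{eq:b7}, together with $bq\le bd<0.031b^2$ from the bound $q\le d<0.031b$ established above, this yields
\[\sum_{v\in[b]}\deg(v)\ \le\ \frac{b(b-1)}{2}(k-1)+0.031b^2+\binom{b}{2}\ =\ \frac{b(b-1)}{2}k+0.031b^2\ <\ \frac{kb^2}{2}+0.031b^2.\]
The essential feature is the factor $\frac12$: since only $O(b^2)$ edges lie entirely inside $[b]$ while there are $\Theta(kb^2)$ edges overall, the degree sum is about $\frac12 kb^2$ rather than $kb^2$.

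Next I would derive a contradiction from the assumption that fewer than $0.334b$ vertices are good. Good vertices all lie in $[b]\setminus(S\cup T)$, which has $b-d-\ell$ elements, and $d<0.031b$ together with $\ell<0.025b$ give $b-d-\ell>0.944b$. Hence more than $0.944b-0.334b=0.61b$ vertices of $[b]\setminus(S\cup T)$ are not good, and by definition each has degree at least $0.95kb-4b$. Summing over just those vertices already gives
\[\sum_{v\in[b]}\deg(v)\ >\ 0.61b\,(0.95kb-4b)\ =\ 0.5795\,kb^2-2.44b^2,\]
which contradicts the upper bound as soon as $0.5795k-2.44>\frac12 k+0.031$, i.e.\ $k>31.08$; since $k\ge100$, this is the desired contradiction.

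I do not anticipate any real difficulty here: the substantive work is already contained in the preceding claims bounding $q$, $d$, and $\ell$, and it only remains to confirm that the constants leave enough room, which they do with margin to spare, since $0.61\cdot0.95>\frac12$ by a fixed amount even before the hypothesis $k\ge100$ is invoked. In fact the same argument gives more than $0.39b$ good vertices when $k\ge100$, but only the stated bound is needed in what follows.
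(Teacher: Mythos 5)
Your proof is correct and is essentially the paper's argument: the paper likewise sums the degrees of the non-good vertices (at least $0.61b$ of them, using $d+\ell\le 0.056b$), corrects for double counting inside $[b]$ by a $b(b-1)/2$ term, and contradicts the edge bound \eqref{eq:b7} together with $q<0.031b$. Your phrasing via the degree sum $\sum_{v\in[b]}\deg(v)$ and the slightly sharper constant $0.95kb-4b$ are only cosmetic differences.
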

\begin{proof}
Suppose otherwise. Then, since $|S \cup T| = d + \ell \le 0.056b$, this means that at least $0.61b$ special vertices have degree at least $0.89kb - 4b$. This means that the total number of edges in $G$ is (subtracting $b(b-1)/2$ to account for possible double counting of edges)
\[0.61b(0.89kb - 4b) - \frac{b(b-1)}{2} > 0.54kb^2 - 3b^2 > 0.5kb^2 + b^2.\]
But we have already shown in \eqref{eq:b7} that the number of edges in $G$ is at most $\frac{b(b-1)}{2}(k-1) + bq \le 0.5kb^2 + 0.031b^2$, which is less than the above quantity, which is a contradiction.
\end{proof}

Now, consider any good vertex $v$. Recall that when we derived the bound \eqref{eq:bdY3}, we used that at most $d'(k-1)+q$ edges are deleted from vertex $1$ in steps 1 and 2. Therefore, since $v$ has degree at least $b$ less than that of vertex $1$, step 3 cannot increase the number of deleted edges from $v$ farther than $d'(k-1)+q-b$.

\begin{claim}\label{claim:goodhighdeg}
There are at most $0.031b$ good vertices $v$ such that at most $d'(k-1)+q-b$ edges are deleted from $v$ in step 2.
\end{claim}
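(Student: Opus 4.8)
The plan is to follow the same template as the preceding claims: assume the assertion fails, and show that the chain of inequalities \eqref{eq:b2}--\eqref{eq:b7} can then be sharpened by more than $bq$, which contradicts the standing hypothesis that $G$ has at least $\tfrac{b(b-1)}{2}(k-1)$ edges.

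Concretely, suppose for contradiction that more than $0.031b$ good vertices $v$ each lose at most $d'(k-1)+q-b$ edges in step~2. Fix such a vertex $v$; the key step is to bound the number of edges removed from $v$ in steps~2 and~3 \emph{combined}. Recall from the derivation of \eqref{eq:bdY3} that at most $d'(k-1)+q$ edges are removed from vertex~$1$ in steps~1 and~2, so after step~2 the degree of vertex~$1$ is at least $\deg(1)-d'(k-1)-q$, and step~3 only removes edges from a vertex of $[b]\setminus(S\cup T)$ down to the degree of vertex~$1$. If step~3 removes no edge from $v$, then $v$ loses in steps~2--3 only its step~2 edges, at most $d'(k-1)+q-b$ by hypothesis. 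Otherwise the final degree of $v$ is at least $\deg(1)-d'(k-1)-q$, so the number of edges removed from $v$ across steps~2--3 is at most $\deg(v)-\deg(1)+d'(k-1)+q$, which is less than $d'(k-1)+q-b$ since $v$ is good and hence $\deg(v)<\deg(1)-b$ (here I use the lower bound on $\deg(1)$ from the corollary following \cref{claim:vtxThighdeg}). In every case $v$ contributes at most $d'(k-1)+q-b$ to $|Y|$, which is $b$ less than the uniform per-vertex bound $d'(k-1)+q$ used in \eqref{eq:bdY3} and in the $|Y|$-term of \eqref{eq:b2}.

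Summing this saving of at least $b$ over the more than $0.031b$ vertices in question, the bound on $|Y|$ — and therefore the right-hand sides of \eqref{eq:b2}, \eqref{eq:b3}, and \eqref{eq:b7} — decreases by more than $0.031b\cdot b$. Since $q\le d<0.031b$ we have $bq<0.031b^2$, so this decrease exceeds $bq$; hence \eqref{eq:b7} becomes strictly less than $\tfrac{b(b-1)}{2}(k-1)$, a contradiction. I expect the only delicate point to be the bookkeeping of the four deletion steps — in particular, keeping the step~1 removals (the $e(v,S)$ edges) separate from the step~2 removals, and confirming the case split above really does account for every good vertex with few step~2 deletions. Unlike the later claims, this one requires no new construction (such as a covering matching of $[b]$); it is a pure edge-count refinement of \eqref{eq:bdY3}.
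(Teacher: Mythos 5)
Your proposal is correct and follows essentially the same route as the paper: the paper likewise observes (in the paragraph preceding the claim) that for a good vertex step 3 cannot push the combined step 2--3 deletions past $d'(k-1)+q-b$, and then lowers the per-vertex bound in \eqref{eq:bdY3} by $b$ for each of the more than $0.031b$ offending vertices, giving a saving exceeding $0.031b^2 > bq$, a contradiction. Your extra care in separating the step 1 deletions (counted in $|X|$) and in the case split for step 3 just makes explicit what the paper leaves implicit.
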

\begin{proof}
Suppose otherwise. For any such vertex $v$, the number of deleted edges touching $v$ in steps 2 and 3 combined cannot exceed $d'(k-1)+q-b$. Thus, for each such $v$, we can lower the bound \eqref{eq:bdY3} by at least $b$. Since there are more than $0.01b$ such $v$, in total we can lower the bound \eqref{eq:bdY3} by at least $0.031b^2 > bq$, a contradiction.
\end{proof}

Now say a vertex is \textit{very good} if it is lonely and good, and more than $d'(k-1)+q-b$ edges are deleted from $v$ in step 2. By the argument preceding the previous claim, no edges are deleted from any very good vertex in step 3.

\begin{claim}
There are at least $0.203b$ very good vertices.
\end{claim}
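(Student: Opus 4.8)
The plan is to assemble the claim by a short counting argument that intersects the three populations already controlled: the good vertices, the good vertices whose step-2 deletion count is large, and the lonely special vertices. No new structural input about $G$ is needed; everything follows by subtracting the ``bad'' contributions from the $0.334b$ good vertices guaranteed by \cref{claim:manygood}.

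First I would invoke \cref{claim:manygood} to fix a set of at least $0.334b$ good vertices. Among these, \cref{claim:goodhighdeg} says at most $0.031b$ have the property that at most $d'(k-1)+q-b$ edges are deleted from them in step 2; discarding these leaves at least $0.334b - 0.031b = 0.303b$ good vertices from each of which \emph{more} than $d'(k-1)+q-b$ edges are deleted in step 2. By the remark preceding \cref{claim:goodhighdeg}, for a good vertex step 3 never pushes its deletion count past $d'(k-1)+q-b$, so in fact no edges are deleted from any of these vertices in step 3; this is exactly the condition appearing in the definition of a very good vertex.

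It remains to arrange that these vertices are lonely. By \cref{claim:fewsibs}, fewer than $0.1b$ special vertices have siblings, hence all but fewer than $0.1b$ of our $\ge 0.303b$ vertices are lonely. Therefore at least $0.303b - 0.1b = 0.203b$ of them are simultaneously lonely, good, and have more than $d'(k-1)+q-b$ edges deleted in step 2 — that is, very good — which is the desired bound.

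The argument is entirely a three-term union/inclusion bound, so there is no real obstacle beyond confirming that the constants compose correctly ($0.334 - 0.031 - 0.1 = 0.203$) and that ``lonely and good with large step-2 deletion'' is precisely the definition of ``very good'' as stated just before the claim; one should also note in passing that $S \cup T$ is disjoint from the set of lonely good vertices by definition of good, so no double counting occurs.
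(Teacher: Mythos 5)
Your proof is correct and is exactly the paper's argument: the paper's own proof simply states that the claim follows directly from the three cited claims, and your counting $0.334b - 0.031b - 0.1b = 0.203b$ (with the step-3 remark noted in passing) is precisely that deduction.
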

\begin{proof}
This follows directly from \cref{claim:manygood}, \cref{claim:fewsibs}, and \cref{claim:goodhighdeg}.
\end{proof}

Now, for any very good vertex $v$, let $j_v$ be such that $d'k - d - j_v$ vertices are deleted from $v$ in step 2. Since $d'k - d - j_v \le d'(k-1)+q - j_v$, this means that we can reduce the bound in \eqref{eq:bdY3} by $j_v$ for each such $v$. Thus, the sum of $j_v$ over all very good vertices $v$ is at most $bq$. But note that $j_v$ counts the number of missing edges from $v$ to all vertices in $\Gamma$ except those in $S$. Thus, there are at most $bq$ missing edges from all good vertices $v$ to all vertices in $\Gamma$ except those in $S$. Therefore, we can deduce the following.

\begin{claim} \label{claim:fewunhappypairs}
There are at most $0.011bq$ pairs $(\gamma, v)$, where $\gamma \in \Phi'$ and $v$ is a very good vertex, such that $v$ has no edge to any vertex in $\gamma$ except the one that is in $S$.
\end{claim}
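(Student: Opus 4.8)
The plan is to read this claim off directly from the inequality just established, namely that $\sum_v j_v \le bq$ where the sum runs over all very good vertices $v$. The crucial observation is that every ``unhappy'' pair $(\gamma,v)$ — meaning $\gamma \in \Phi'$, $v$ very good, and $v$ adjacent to no vertex of $\gamma$ other than the unique vertex of $\gamma$ lying in $S$ — accounts for exactly $k-1$ missing edges incident to $v$: the edges from $v$ to the $k-1$ vertices of $\gamma$ that are not in $S$. There are precisely $k-1$ such vertices because $\gamma \in \Phi'$ contains only one vertex of $S$.

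First I would check that these $k-1$ missing edges are genuinely among those counted by $j_v$: by definition $j_v$ is the number of non-edges from $v$ to the union of the cells of $\Gamma$ with the vertices of $S$ removed, and edges from $v$ into $S$ were deleted back in step 1 rather than step 2, so there is no overlap to worry about. Next, since distinct cells $\gamma, \gamma' \in \Phi' \subseteq \Gamma$ are disjoint, the missing-edge sets arising from two unhappy pairs sharing the same $v$ are disjoint; hence for each fixed very good vertex $v$, the number of $\gamma$ for which $(\gamma,v)$ is unhappy is at most $j_v/(k-1)$. Summing over all very good $v$ and using $\sum_v j_v \le bq$, the total number of unhappy pairs is at most $bq/(k-1)$.

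Finally, in the regime at hand we have $k \ge 100$, so $k - 1 \ge 99$ and $bq/(k-1) \le bq/99 < 0.011bq$, which is the claimed bound. I do not expect any real obstacle here: this is a short bookkeeping step, and the only points needing (brief) justification are that the relevant missing edges fall under the count $j_v$ and that disjointness of the cells makes the per-vertex counts additive, both of which are immediate from the definitions.
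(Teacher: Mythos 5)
Your proposal is correct and is essentially the paper's own argument: each unhappy pair forces $k-1$ missing edges from $v$ into $\gamma\setminus S$, these are exactly the edges counted by $j_v$, and $\sum_v j_v \le bq$ with $k\ge 100$ gives at most $bq/(k-1) < 0.011bq$ pairs. The extra bookkeeping you include (no overlap with step 1 deletions, disjointness of cells) is a correct spelling-out of what the paper leaves implicit.
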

\begin{proof}
Any such pair $(\gamma, v)$ contains $k-1$ missing edges from $v$ to the vertices in $\gamma$ except the ones in $S$, so by the preceding argument, there are at most $bq/(k-1) < 0.011bq$ such pairs.
\end{proof}

Define a pair $(\gamma, \beta)$ of cells, where $\gamma \in \Phi'$ and $\beta$ contains a lonely vertex $v$, to be \textit{happy} if there is an edge from $v$ to some vertex in $\gamma$ (except the vertex in $\gamma$ that is in $S$). There are at least $(q/2)(0.203b)$ pairs $(\gamma, \beta)$ where $\gamma \in \Phi'$ and $\beta$ is very good, so by \cref{claim:fewunhappypairs}, there are at least $0.09bq$ happy pairs. 

We will now show that happy pairs can be used to reduce the bound in \eqref{eq:bddeg11} in order to obtain a contradiction. In order to do so, we must first introduce some notation to measure how much the bound can be reduced.

We had used \eqref{eq:bddeg11} as a bound for the number of edges deleted from any $w \in T$ in step 4. Note the edges deleted from $w$ in step 4 are exactly those which are not to any edge in $[b] \setminus T$ (since the edges to $[b] \setminus T$ were deleted in steps 1 and 2). To this end, define \[F(w) = a(k-1) - e(w, (\alpha_1 \cup \dots \cup \alpha_a) \setminus ([b] \setminus T)),\] where the latter term is just the number of edges from $w$ to all cells, excluding those edges to $[b] \setminus T$. We also define \[f(w) = e(w, R),\] recalling that the exterior $R$ is defined as the complement of $\alpha_1 \cup \dots \cup \alpha_a \cup \{1\}$ (and thus is also disjoint from $[b] \setminus T$). Then, the total number of edges deleted from $w$ in step 4 is
\begin{equation*}
a(k-1) - F(w) + f(w).
\end{equation*}
Consequently, we can lower the bound on the number of edges deleted from $w$ given by \eqref{eq:bddeg11} (and thus also lower the bound \eqref{eq:bdZ}) by $F(w) - f(w)$, for each $w \in T$. Let \[E(w) = \max \{F(w) - f(w), 0\}.\] Since the bound \eqref{eq:bdZ} cannot be lowered by more than $bq$, we have
\begin{equation} \label{eq:bdE1}
\sum_{w \in T} E(w) \le bq.
\end{equation}
Now, define the function $F(w, \beta)$ to be the contribution of the cell $\beta$ toward $F(w)$. Formally, we define
\[F(w, \beta) = (k-1) - e(w, \beta \setminus ([b] \setminus T)).\]
Since each cell has at least one vertex in $[b] \setminus T$, we have $|\beta \setminus ([b] \setminus T)| \le k-1$, so $F(w, \beta)$ is always nonnegative. Then, we have by definition that
\[F(w) = \sum_{i \in [a]} F(w, \alpha_i).\]
Now, similarly define
\[E(w, \beta) = \max\{F(w, \alpha_i) - f(w), 0\}.\]
Then, we have 
\[E(w) \ge \sum_{i \in [a]} E(w, \alpha_i).\]
This is because if any term in the sum is positive, then the right hand side is immediately less than $F(w)$ by at least $f(w)$. Therefore, \eqref{eq:bdE1} implies that
\begin{equation*}
\sum_{w \in T,\, i \in [b]} E(w, \alpha_i) \le bq.
\end{equation*}
Furthermore, for a block $\gamma \in \Gamma$, define $E(\gamma, \beta)$ to be the sum over all $w \in \gamma$ which are also in $T$ of the quantity $E(w, \beta)$. Then, we have
\begin{equation} \label{eq:bdE3}
\sum_{\gamma \in \Gamma,\, i \in [b]} E(\gamma, \alpha_i) \le bq.
\end{equation}

Now we will bound the value of $E(\gamma, \beta)$ when $(\gamma, \beta)$ is a happy pair. First, we will need the following lemma.

\begin{lemma} \label{lem:nomatching}
If $(\gamma, \beta)$ forms a happy pair, then there exists no matching in $G$ consisting only of edges from $\gamma$ to $\beta \cup R$ which covers all vertices in $\gamma \cap T$.
\end{lemma}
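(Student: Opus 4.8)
The plan is to argue by contradiction. Suppose a matching $N$ of the stated form exists; I will assemble from it a matching covering all of $[b]$, built only from edges of $G$ and some of the cells $\alpha_i$, contradicting the standing assumption of \cref{prop:graph} that $[b]$ admits no such covering matching.

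First I fix the local picture. Since $\gamma\in\Phi'\subseteq\Phi$, every vertex of $\gamma$ is adjacent to $1$; as $1$ is adjacent to no lonely vertex, $\gamma$ must contain a sibling, so we may write $\gamma\cap[b]=\{x,t_1,\dots,t_m\}$ with $m\ge 1$, where $x$ is the unique vertex of $S$ in $\gamma$ and $t_1,\dots,t_m\in T$ are its siblings; thus $\gamma\cap T=\{t_1,\dots,t_m\}$. Let $v$ be the lonely vertex of $\beta$, so $\beta\cap[b]=\{v\}$, and note $\beta\ne\gamma$ since $\gamma$ contains a sibling while $\beta$ does not. Happiness of $(\gamma,\beta)$ gives an edge $\{v,w\}$ of $G$ with $w\in\gamma\setminus\{x\}$. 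Discarding redundant edges, write $N=\{\{t_i,u_i\}:i\in[m]\}$ with the $u_i$ distinct and each $u_i\in\beta\cup R$; observe that $x,t_1,\dots,t_m$ and $w$ all lie in $\gamma$ while each $u_i$ lies in $\beta\cup R$, and that distinct cells, $R$, and $\{1\}$ are pairwise disjoint (in the application $1$ lies in no cell, which we henceforth assume).

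Now build the covering matching. As a skeleton, take all cells $\alpha_i$ other than $\gamma$ and $\beta$ — together these cover $[2,b]\setminus(\gamma\cup\beta)$ — together with the edge $\{1,x\}$, which lies in $G$ because $1$ is adjacent to every vertex of $\gamma$, and which covers $1$ and $x$. The only special vertices left uncovered are $t_1,\dots,t_m$ and $v$, and these we handle by a short case analysis on how $N$ and the edge $\{v,w\}$ interact. If some $u_i=v$, then $N$ already covers all the $t_i$ and $v$, so adjoin $N$ to the skeleton. Otherwise every $u_i\in(\beta\cup R)\setminus\{v\}$; if moreover $w\notin\{t_1,\dots,t_m\}$, then $\{v,w\}$ is disjoint from $N$, so adjoin $N\cup\{\{v,w\}\}$; and if $w=t_{i_0}$ for some $i_0$, then the single edge $\{v,t_{i_0}\}$ covers both $v$ and $t_{i_0}$, so adjoin $\{\{v,t_{i_0}\}\}\cup\bigl(N\setminus\{\{t_{i_0},u_{i_0}\}\}\bigr)$. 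In each case the disjointness of distinct cells together with the containments recorded above makes the result a genuine matching; since it covers $[b]$, this is the desired contradiction.

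The disjointness verifications in the three cases are routine bookkeeping, and I expect the only genuinely delicate point — the reason the case split cannot be avoided — to be that the happiness edge $\{v,w\}$ may land on a vertex of $\gamma\cap T$ that $N$ is already covering, or conversely that an edge of $N$ may land on $v$; in either coincidence a single edge winds up covering two of the vertices we need, so the apparently conflicting edge can simply be dropped. (If one insists on treating the degenerate possibility that $1$ lies in some cell $\alpha_{i_0}$ — which does not occur in the application, since there the cells form a matching avoiding $1$, a property preserved under the induction in the proof of \cref{prop:graph} — one puts $\alpha_{i_0}$ into the skeleton in place of the edge $\{1,x\}$ and argues exactly as above.)
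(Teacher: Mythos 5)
Your proof is correct and follows essentially the same route as the paper: take the edge from $1$ to the $S$-vertex $x$ of $\gamma$ (available since $\gamma\in\Phi$), adjoin the hypothesized matching covering $\gamma\cap T$, cover $v$ via the happiness edge (dropping an overlapping edge if necessary), and complete with all cells other than $\gamma,\beta$; your case split just makes explicit the overlap handling the paper disposes of in one line. (Your closing parenthetical is moot: if $1$ lay in some cell, the cells alone would already be a matching covering $[b]$, contradicting the hypothesis of \cref{prop:graph}.)
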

\begin{proof}
Suppose otherwise. We will construct a matching $M$ which covers $[b]$. Let $x$ denote the vertex in $\gamma$ which is also in $S$ (there is only one since $\gamma \in \Phi'$), and let $v$ denote the special vertex in $\beta$.

Since $\gamma \in \Phi$, there exists an edge connecting $1$ with $x$; add this edge to $M$. Also, add all edges of the matching described in the lemma statement to $M$. Then $M$ now covers all special vertices in $\gamma$.

If $v$ is not already covered by $M$, then note that since $(\gamma, \beta)$ is happy, there must be an edge from $v$ to some vertex in $\gamma$ other than $x$. Then, we can just add this vertex to $M$, and if there is already an edge containing the other vertex, we can just delete it.

Now $M$ still covers all special vertices in $\gamma$, in addition to $v$ (which is the only special vertex in $\beta$) and 1. Also, all edges in $M$ only contain vertices in $\{1\} \cup \beta \cup \gamma \cup R$. Thus we can add all cells $\alpha_i$ other than $\beta, \gamma$ to $M$, so that $M$ is a matching which covers all special vertices, a contradiction.
\end{proof}

This allows us to bound $E(\gamma, \beta)$.

\begin{lemma} \label{lem:happyhighE}
If $(\gamma, \beta)$ is a happy pair, then $E(\gamma, \beta) \ge k-1$. 
\end{lemma}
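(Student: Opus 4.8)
The plan is to show that if $E(\gamma,\beta) < k-1$, then one can build a matching in $G$ of the type forbidden by Lemma~\ref{lem:nomatching}, i.e.\ a matching consisting only of edges from $\gamma$ to $\beta \cup R$ that covers all of $\gamma \cap T$. Since $(\gamma,\beta)$ is happy, Lemma~\ref{lem:nomatching} then gives a matching covering $[b]$, a contradiction. So the whole argument is a Hall-type matching construction, where the bound $E(\gamma,\beta) < k-1$ is used to verify the relevant Hall condition.

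First I would unwind the definitions. Recall $E(\gamma,\beta) = \sum_{w \in \gamma \cap T} E(w,\beta)$ and $E(w,\beta) = \max\{F(w,\beta) - f(w),\,0\}$, where $F(w,\beta) = (k-1) - e(w, \beta \setminus ([b]\setminus T))$ and $f(w) = e(w,R)$. So $E(\gamma,\beta) < k-1$ forces, for each $w \in \gamma \cap T$, that either $f(w) \ge 1$ (so $w$ has a neighbor in $R$) or $e(w, \beta \setminus ([b]\setminus T)) \ge 1$ (so $w$ has a neighbor among the non-special vertices of $\beta$, which is where the "happy" edge to $\beta$ would live), and moreover the total deficiency $\sum_w \big((k-1) - e(w, \beta\setminus([b]\setminus T)) - f(w)\big)_+$ is strictly less than $k-1$. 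The key point is that $\beta$ has at least $k-1$ non-special vertices available (it has at most one special vertex since $\beta$ contains a lonely vertex $v$, in the setup of the happy pair), and $R$ is enormous ($|R| = \Omega(n)$), so there is ample room in $\beta \cup R$ to match $\gamma \cap T$, which has size at most $b = O(1)$.

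The main step is to check Hall's condition. I want to find a system of distinct representatives: for each $w \in \gamma \cap T$ a distinct neighbor in $\beta \cup R$. For any subset $W \subseteq \gamma \cap T$, I must show $|N(W) \cap (\beta \cup R)| \ge |W|$. The vertices of $R$ contribute $\sum_{w \in W} f(w)$ counted with multiplicity, but since distinct $w$ can share $R$-neighbors I instead argue directly: if every $w \in W$ has at least $|W|$ neighbors in $\beta \cup R$ then we get Hall's condition for $W$ automatically; and $e(w, \beta \cup R) \ge e(w, \beta \setminus([b]\setminus T)) + f(w) \ge (k-1) - E(w,\beta) \ge (k-1) - E(\gamma,\beta) > 0$, and more carefully $\sum_{w \in W}\big((k-1) - e(w,\beta\cup R)\big)_+ \le E(\gamma,\beta) < k-1$, so each $w$ individually has $e(w,\beta\cup R) > 0$ and the total shortfall is $< k-1$; combined with $k-1 \ge |W|$ (as $k-1$ is large and $|W| \le b$, using the hypothesis $k \ge 100$ or $k \ge b+1$), Hall's condition holds. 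I expect the main obstacle is bookkeeping the fact that $R$-neighbors may be shared between different $w$'s and that $\beta$ has limited capacity; the cleanest route is probably to observe that since $|\gamma \cap T| \le b-1 \le k-1$ and $|R| = \omega(1)$, once every $w$ has at least one neighbor in $\beta \cup R$ and the neighborhoods are "large enough on average," a greedy assignment (handle vertices whose only neighbors are in $\beta$ first, then use the abundance of $R$ for the rest) produces the matching. I would then invoke Lemma~\ref{lem:nomatching} to conclude.

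\begin{proof}
Suppose for contradiction that $E(\gamma,\beta) \le k-2$. Let $v$ be the lonely vertex of $\beta$, and recall $\gamma \in \Phi'$, so $\beta$ contains at least $k-1$ vertices not in $[b] \setminus T$. We claim there is a matching in $G$ using only edges from $\gamma$ to $\beta \cup R$ covering every vertex of $\gamma \cap T$.

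Set $W = \gamma \cap T$, so $|W| \le b - 1$. For $w \in W$ we have
\[
e(w,\, \beta \cup R) \;\ge\; e\big(w,\, \beta \setminus ([b]\setminus T)\big) + e(w, R) \;=\; (k-1) - F(w,\beta) + f(w) \;\ge\; (k-1) - E(w,\beta),
\]
and since $\sum_{w \in W} E(w,\beta) = E(\gamma,\beta) \le k-2$, each $E(w,\beta) \le k-2$, so $e(w,\beta\cup R) \ge 1$ for every $w \in W$. Moreover, the total shortfall satisfies
\[
\sum_{w \in W} \big((k-1) - e(w,\beta\cup R)\big)_+ \;\le\; \sum_{w \in W} E(w,\beta) \;=\; E(\gamma,\beta) \;\le\; k-2.
\]
Now we verify Hall's condition for matching $W$ into $\beta \cup R$. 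Let $W' \subseteq W$ be arbitrary and nonempty. If some $w \in W'$ has $e(w,\beta\cup R) \ge |W'|$, then trivially $|N(W') \cap (\beta\cup R)| \ge |W'|$. Otherwise every $w \in W'$ has $e(w,\beta\cup R) \le |W'| - 1 \le b - 2$, hence contributes at least $(k-1) - (b-2) = k - b + 1$ to the shortfall sum above; summing over $w \in W'$ gives shortfall at least $|W'|\,(k-b+1)$. Since the total shortfall is at most $k-2$, we get $|W'|(k-b+1) \le k-2$. When $k \ge b+1$ this forces $|W'| \le (k-2)/2 < k-1$; when $k \ge 100$ and $b \ge k$ this case cannot even arise since then $|W'| \le b-1$ would still need $e(w,\beta\cup R) \le b-2$, but we already showed $e(w,\beta\cup R) \ge (k-1) - E(\gamma,\beta) \ge 1$, and in fact the shortfall bound $|W'|(k-b+1)\le k-2$ is vacuous when $k \le b$, so instead we argue directly: the vertices of $R$ are abundant, $|R| = \omega(1) > b$, and each $w \in W'$ with $e(w,\beta\cup R)$ small still satisfies $e(w,\beta\cup R) \ge 1$; order $W'$ so that vertices with all neighbors inside $\beta$ come first, assign them distinct neighbors in $\beta$ (possible since $\beta$ has $\ge k-1 \ge |W'|$ usable vertices and each such $w$, having shortfall at least $(k-1) - e(w,\beta) \ge (k-1)-(|W'|-1)$ impossible unless $|W'| \ge k$, so in fact no such $w$ exists when $|W'| < k$), then assign the remaining vertices distinct neighbors in $R$, which is possible since $|R|$ exceeds $|W|$. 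In all cases $|N(W')\cap(\beta\cup R)| \ge |W'|$, so Hall's condition holds.

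Therefore there is a matching $M_0$ in $G$ consisting of edges from $\gamma$ to $\beta \cup R$ covering all of $\gamma \cap T$. This contradicts \cref{lem:nomatching}. Hence $E(\gamma,\beta) \ge k-1$.
\end{proof}
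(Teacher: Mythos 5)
Your strategy is the paper's argument run in contrapositive: assume $E(\gamma,\beta)\le k-2$, use the pointwise inequality $E(w,\beta)\ge (k-1)-e(w,\beta\cup R)$ to bound the total Hall deficiency of $\gamma\cap T$ into $\beta\cup R$, verify Hall's condition, and contradict \cref{lem:nomatching}. The setup is sound: your bound $\sum_{w\in W}\max\{(k-1)-e(w,\beta\cup R),0\}\le E(\gamma,\beta)$ is correct. The gap is in the verification of Hall's condition in the deficient case. When every $w\in W'$ satisfies $e(w,\beta\cup R)\le |W'|-1$, each such $w$ has shortfall at least $k-|W'|$, and summing over $W'$ gives total shortfall at least $|W'|(k-|W'|)\ge k-1$, since $1\le |W'|\le |\gamma\cap T|\le k-1$ (the unique vertex of $\gamma$ in $S$ is not in $T$). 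This already contradicts your total bound of $k-2$, so the deficient case cannot occur and Hall's condition follows, with no case split on $k\ge b+1$ versus $b\ge k$ needed. Instead you replace $|W'|-1$ by $b-2$, which discards exactly the quantity you need: the resulting inequality $|W'|(k-b+1)\le k-2$ only bounds $|W'|$ when $k\ge b+1$ (which does not establish $|N(W')\cap(\beta\cup R)|\ge |W'|$) and is vacuous when $b\ge k$, as you yourself observe.

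The fallback greedy argument does not repair this. The parenthetical claim that a vertex with all its neighbors in $\beta$ and shortfall at least $(k-1)-(|W'|-1)$ is ``impossible unless $|W'|\ge k$'' is false for a single vertex: an individual shortfall of $k-|W'|$ is consistent with the total bound $k-2$ as soon as $|W'|\ge 2$; only the sum over all of $W'$ produces a contradiction. Likewise, ``assign the remaining vertices distinct neighbors in $R$, which is possible since $|R|$ exceeds $|W|$'' does not follow: the size of $R$ says nothing about the neighborhoods of the vertices of $W'$ --- two of them could each have a single neighbor in $\beta\cup R$, and it could be the same vertex of $R$; such configurations are excluded only by the summed shortfall computation, which you never complete. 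Replacing your case analysis by the one-line comparison $|W'|(k-|W'|)\le E(\gamma,\beta)\le k-2$ against $|W'|(k-|W'|)\ge k-1$ fixes the proof, and is exactly the computation in the paper, there phrased via the defect form of Hall's theorem applied to a set $W\subset\gamma\cap T$ with $|N(W)\cap(\beta\cup R)|\le |W|-1$.
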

\begin{proof}
Suppose $(\gamma, \beta)$ is happy. Then, applying Hall's marriage theorem along with \cref{lem:nomatching}, there must exist a set $W \subset \gamma \cap T$ with at most $|W|-1$ neighbors in $\beta \cup R$. Note that $1 \le |W| \le k-1$ (the second inequality is because $\gamma$ has one vertex in $S$ which is thus not in $T$). Then, for any $w \in W$, we have
\begin{align*}
E(w, \beta) \ge F(w, \beta) - f(w) 
&= (k-1) - e(w, \beta \setminus ([b] \setminus T)) - e(w, R) \\
&\ge (k-1) - e(w, \beta) - e(w, R) \\
&= (k-1) - e(w, \beta \cup R) \\
&\ge (k-1) - (|W|-1) \\
&= k - |W|.
\end{align*}
Summing over $w$ in $W$, this means that
\[E(\gamma, \beta) \ge \sum_{w \in W} E(w, \beta) \ge |W|(k-|W|).\]
Since $1 \le |W| \le k-1$, the above quantity is at least $k-1$.
\end{proof}

Finally, note that \cref{lem:happyhighE}, along with the previous observation that there are at least $0.09bq$ happy pairs, means that
\begin{equation*} 
\sum_{\gamma \in \Gamma,\, i \in [b]} E(\gamma, \alpha_i) \ge (0.09bq)(k-1) > bq.
\end{equation*}
This is at odds with \eqref{eq:bdE3}, so we finally have a contradiction. This completes the proof of \cref{prop:graph}.

\section{Concluding remarks}
This problem is closely related to the Erd\H{o}s matching conjecture, which states the following.
\begin{conj}[Erd\H{o}s matching conjecture, \cite{erdos}] 
Let $n, k, r$ be positive integers such that $n \ge kr$. Suppose that $\mcF$ is a family of $k$-subsets of $[n]$. If $\mcF$ has no matching of size $r$, then 
\[|\mcF| \le \max\left\{\binom{kr-1}{k}, \binom{n}{k} - \binom{n-r+1}{k}\right\}.\]
Here $\binom{kr-1}{k}$ is the number of $k$-subsets of $[kr-1]$, and $\binom{n}{k} - \binom{n-r+1}{k}$ is the number of $k$-subsets of $[n]$ which contain an element of $[r-1]$.
\end{conj}
The cases where $n \ge 5rk/3 - 2r/3$ and where $n \le (r+1)(k+\varepsilon_k)$ (where $\varepsilon_k > 0$ is some function of $k$) have been proven by Frankl and Kupavskii \cite{frankl-kupavskii-emc-1, frankl-emc-2}.
In these proofs, the authors crucially used the fact that $\mcF$ can be assumed to be a shifted family. We wonder whether our methods may be generalized to make progress on this conjecture. The notion of a blocking set is no longer helpful as is, since we care about matchings of size smaller than $n/k$. However, maybe it is possible to define a more general notion of a blocking set in the case that $n > kr$, and use methods similar to ours to show that such a blocking configuration must be of a certain form. 

We also wonder whether our methods can be used to prove \cref{thm:main} in the case where $k$ is small, in order to completely resolve \cref{conj:main}. In particular, the only obstacle to extending our proof to all $k$ is \cref{prop:graph}, the statement about graphs. Though we were only able to prove it for large enough $k$, based on examination of small cases, we conjecture that this statement is true for small $k$ as well. 
\begin{conj}
The statement of \cref{prop:graph} holds whenever $k \ge 4$.
\end{conj}
This conjecture would immediately imply \cref{conj:main} for all $k \ge 4$, since the rest of our proof still holds up. We believe there may still be a simple proof of this conjecture, since the statement is primarily about ordinary graphs, which are substantially simpler to deal with than set systems.

In the case $k=3$, the construction $\mc E'(3, n, b)$ corresponds to a second equality case in \cref{prop:graph}, shown in \cref{fig:alteqcase}.
If this equality case is in fact the only other equality case of \cref{prop:graph} (as small cases seem to suggest), then one can use an identical argument to ours to prove that the optimal $\mcF$ is either $\mc E(3, n, b)$ or $\mc E'(3, n, b)$. By the computation in \cref{sec:sizeE}, we would then know that it must be $\mc E'(3, n, b)$, leading to the following conjecture.

\begin{conj}
For each positive integer $b$ there exists $s_0(b)$ such that the following holds: Let $n = ks$, where $s \ge s_0(b)$, and let $\mcF$ be a family of $3$-subsets of $[n]$ with no perfect matching and no blocking set of size less than $b$. Then,
\[|\mcF| \le |\mc E'(3, n, b)|.\]
\end{conj}

\begin{figure}[ht]
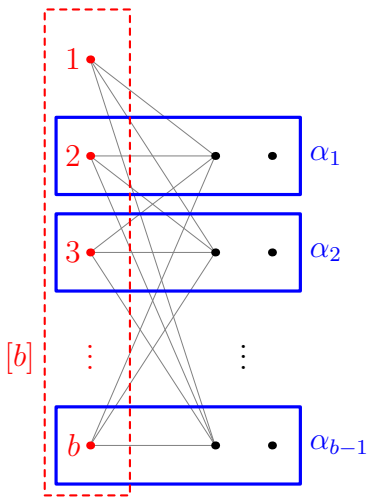

\centering
\begin{asy}
size(0, 6.5cm);
real r = 1.7; // y scale

pen edgepen = grey + 0.3;
for (int y1=0; y1<=4; ++y1) {
	for (int y2=0; y2<4; ++y2) {
		if (y2==1) continue;
		if (y1==1) continue;
		//for (int x=2; x<=4; ++x) {
			draw((-0.2, r*y1)--(2, r*y2), edgepen);
		//}
	}
}

for (int x=2; x<=3; ++x) {
	for (int y=0; y<=3; ++y) {
		if (y != 1) { dot( (x,r*y) ); }
	}
}
dot("$1$", (-0.2,4*r), dir(180), red);
dot("$2$", (-0.2,3*r), dir(180), red);
dot("$3$", (-0.2,2*r), dir(180), red);
label("$\vdots$", (-0.2,r), red);
dot("$b$", (-0.2,0), dir(180), red);

label("$\vdots$", (2.5,r));

pen alphapen = blue+1.2;
draw(box((-0.8,3.4*r), (3.5,2.6*r)), alphapen);
draw(box((-0.8,2.4*r), (3.5,1.6*r)), alphapen);
draw(box((-0.8,0.4*r), (3.5,-0.4*r)), alphapen);
label("$\alpha_1$", (3.5,3*r), dir(0), alphapen);
label("$\alpha_2$", (3.5,2*r), dir(0), alphapen);
label("$\alpha_{b-1}$", (3.5,0), dir(0), alphapen);

draw(box((-1,-0.4*r-0.2), (0.5, 4.4*r+0.2)), red+linetype("3 3")+0.8);
label("$[b]$", (-1, 0.9*r), dir(180), red);
\end{asy}
\caption{The other equality case of \cref{prop:graph} in the case where $k=3$}
\label{fig:alteqcase}
\end{figure}

In \cite{frankl-main}, Frankl also conjectured that the result of \cref{thm:frankl} is actually true for all $s \ge 6$ (as opposed to $s$ needing to be larger than some function of $k$). We may similarly expect that the dependence on $k$ may be removed in \cref{thm:main}. Thus we conjecture the following.
\begin{conj}
There exists a function $s_0(b)$ such that whenever $k, b, s$ are positive integers such that $k \ge 4$ and $s \ge s_0(b)$, the following holds: Let $n = ks$, and suppose that $\mcF$ is a family of $k$-subsets of $[n]$. Then, if $\mcF$ has no perfect matching and no blocking set of size less than $b$, then
\[|\mcF| \le |\mc E(k, n, b)|.\]
\end{conj}

\section{Acknowledgments}
This research was conducted at the University of Minnesota, Duluth REU and was supported by NSF-DMS grant 1949884 and NSA grant H98230-20-1-0009. Thanks to Joe Gallian for running the REU program. Thanks also to Zachary Chroman for helpful discussions, and to Evan Chen for reviewing a draft of this paper and providing suggestions and assistance in making diagrams.

\printbibliography[heading=bibintoc]

@ARTICLE{erdos,
    author = {Paul Erd\H{o}s},
    title = {A problem on independent $r$-tuples},
    journal = {Ann. Univ. Sci. Budapest},
    volume = {8},
    year = {1965},
    pages = {93--95}
}

@article{kleitman,
title = "Maximal number of subsets of a finite set no $k$ of which are pairwise disjoint",
journal = "J. Combin. Theory",
volume = "5",
number = "2",
pages = "157 - 163",
year = "1968",
issn = "0021-9800",
doi = "10.1016/S0021-9800(68)80050-X",
%url = "http://www.sciencedirect.com/science/article/pii/S002198006880050X",
author = "Daniel J. Kleitman",
%abstract = "Bounds are obtained on the number of subsets in a family of subsets of an n element set which contains no k pairwise disjoint members. For n=mk and n=mk−1, the bounds are best possible."
}

@article{frankl-main,
title = "On non-trivial families without a perfect matching",
journal = "European J. Combin.",
volume = "84",
pages = "103044",
year = "2020",
issn = "0195-6698",
doi = "10.1016/j.ejc.2019.103044",
%url = "http://www.sciencedirect.com/science/article/pii/S0195669819301453",
author = "Peter Frankl",
%abstract = "Let k≥2, s≥3 and n=ks be integers. For s>s0(k) we determine the maximum of |F| for F⊂[n]k not having a perfect matching nor an isolated vertex."
}

@article{frankl-shifting,
  title={The shifting technique in extremal set theory},
  author={Frankl, Peter},
  journal={Surveys in combinatorics},
  volume={123},
  pages={81--110},
  year={1987},
  publisher={Cambridge University Press}
}

@article{erdos-ko-rado,
    author = "Erd\H{o}s, P. and Ko, Chao and Rado, R.",
    title = "{Intersection theorems for systems of finite sets}",
    journal = {Q. J. Math.},
    volume = {12},
    number = {1},
    pages = {313-320},
    year = {1961},
    %month = {01},
    issn = {0033-5606},
    doi = {10.1093/qmath/12.1.313},
    %url = {https://doi.org/10.1093/qmath/12.1.313},
    %eprint = {https://academic.oup.com/qjmath/article-pdf/12/1/313/7288156/12-1-313.pdf},
}

@article{frankl-tokushige,
title = "Invitation to intersection problems for finite sets",
journal = "J. Combin. Theory Ser. A",
volume = "144",
pages = "157 - 211",
year = "2016",
%note = "Fifty Years of the Journal of Combinatorial Theory",
issn = "0097-3165",
doi = "10.1016/j.jcta.2016.06.017",
%url = "http://www.sciencedirect.com/science/article/pii/S0097316516300516",
author = "Peter Frankl and Norihide Tokushige"
}

@article{frankl-emc-2,
	Author = {Frankl, Peter},
	Journal = {Israel J. Math.},
	Number = {1},
	Pages = {421--430},
	Title = {Proof of the Erd{\H o}s matching conjecture in a new range},
	Volume = {222},
	Year = {2017},
	Doi = {10.1007/s11856-017-1595-7},
	%Isbn = {1565-8511}
}

@misc{frankl-kupavskii-emc-1,
    title = {The Erd\H{o}s Matching Conjecture and concentration inequalities},
    author = {Peter Frankl and Andrey Kupavskii},
    year={2018},
    eprint={1806.08855},
    archivePrefix={arXiv},
    primaryClass={math.CO}
}

\end{document}